\newcommand{\todokk}[1]{\todo[inline,color=green!40]{from KK: #1}}
\newtheorem{theorem}{Theorem}[section]
\newtheorem{lemma}[theorem]{Lemma}
\newtheorem{remark}[theorem]{Remark}
\let\originalleft\left
\let\originalright\right
\renewcommand{\left}{\mathopen{}\mathclose\bgroup\originalleft}
\renewcommand{\right}{\aftergroup\egroup\originalright}
\newcommand{\Addresses}{{% additional braces for segregating \footnotesize
		\footnote{
			%	\footnotesize
			
			\noindent \textsuperscript{1}  Southern Mathematical Institute of Vladikavkaz Scientific Center of Russian Academy of Sciences, 362025, Russia. \par\nopagebreak
			\noindent  \textit{e-mail:} \texttt{zhanatotieva@gmail.com}
			
			\noindent \textsuperscript{2} Center for Mathematics and Applications (NovaMath),  NOVA	SST,	Portugal.\par\nopagebreak
			\noindent  \textit{e-mail:} \texttt{kushkinra@gmail.com, k.kinra@fct.unl.pt.}

			\noindent \textsuperscript{3} Department of Mathematics, Indian Institute of Technology Roorkee-IIT Roorkee,
			Haridwar Highway, Roorkee, Uttarakhand 247667, INDIA.\par\nopagebreak
			\noindent  \textit{e-mail:} \texttt{manilfma@iitr.ac.in, maniltmohan@gmail.com.}
            
			\noindent \textsuperscript{*}Corresponding author.

			\textit{Key words:} Wave equation, acoustic boundary condition, inverse problem, memory kernel, contraction mapping principle.
			
			Mathematics Subject Classification (2020): 35R30, 35L05, 35L20, 35Q99.

}}}
\begin{document}
	
%	\linenumbers
	
	\title[]{{\bf INVERSE PROBLEM FOR WAVE EQUATION OF MEMORY  TYPE WITH ACOUSTIC BOUNDARY CONDITIONS: GLOBAL SOLVABILITY}
			\Addresses}
		\author[Zhanna D. Totieva]{Zhanna D. Totieva\textsuperscript{1*}}
	\author[Kush Kinra]{Kush Kinra\textsuperscript{2}}
	\author[Manil T. Mohan]{Manil T. Mohan\textsuperscript{3}}

    	\begin{abstract}
	In this article, we study the one-dimensional inverse problem of determining the memory kernel by the integral overdetermination condition for the direct problem of finding the velocity potential and the displacement of boundary points. A wave equation with initial and acoustic boundary conditions in media with dispersion is used as a mathematical model. The inverse problem is reduced to an equivalent problem with homogeneous boundary conditions for the system of integro-differential equations. Using the technique of estimating integral equations and the contraction mappings principle in Sobolev spaces, the global existence and uniqueness theorem for the inverse problem is proved.
	\end{abstract}

	\maketitle

\section{Introduction}\setcounter{equation}{0}\label{sec-intro}

We consider the following wave equation of memory type with initial and  acoustic boundary conditions for a medium with dispersion:
\begin{equation}\label{eqn-wave}
    \left\{
    \begin{aligned}
u_{tt}-u_{xx}-\beta u_{xxtt}+\int_0^t k(t-s)u_{xx}(x,s)\,ds=0,\ (x,t)\in I\times (0,T),\\
u\big|_{x=0}\equiv 0,\quad  0< t<T,\\
\Big[u_x-\int_0^t k(s)u_{x}(x,t-s)\,ds\Big]\Bigg|_{x=\ell}=y'(t), \quad  0< t<T,\\
u_t\big|_{x=\ell}=-py'(t)-qy(t), \quad  0< t<T,\\
u\big|_{t=0}=u_0(x), \ u_t\big|_{t=0}=u_1(x), \quad  x\in I,
    \end{aligned}
    \right.
\end{equation}
where $u(\cdot,\cdot)$~is~the velocity potential, $k(\cdot)$ is the memory kernel, $y(\cdot)$~is displacement at the point $x=\ell$; $T,\ \beta,\ p,\ q$ are some positive known constants, $I:=(0,\ell)$.

Many authors have recently examined wave equations with acoustic boundary conditions. In \cite{Park+Park_2011}, the problem \eqref{eqn-wave} with $\beta=0$ is considered in the multi-dimensional case, when $I$ is a bounded domain in $\mathbb{R}^n\ (n\geq 1)$ with boundary $\Gamma=\Gamma_0\cup \Gamma_1$ of class $C^2$, $\Gamma_0$ and $\Gamma_1$ are closed and disjoint. In equation $\eqref{eqn-wave}_3$, the derivative with respect to the unit outward normal to $\Gamma$ is used. The work \cite{Park+Park_2011}  investigated the influence of the kernel function $k$ and demonstrated general decay rates of solutions when $k$ does not decay exponentially. The authors in \cite{Li+Xi_2019} considered the nonlinear viscoelastic Kirchhoff-type equation with initial conditions and acoustic boundary conditions. They showed that, depending on the properties of the convolution kernel $k$ at infinity, the energy of the solution decays exponentially or polynomially as $t\rightarrow +\infty.$ The approach is based on integral inequalities and multiplier techniques. Instead of using a Lyapunov-type technique for some perturbed energy, the authors of the work \cite{Li+Xi_2019} concentrated on the original energy by showing that it satisfies a nonlinear integral inequality which, in turn, yields the final decay estimate.

Boundary conditions of memory type, as  $\eqref{eqn-wave}_3$, imposed
on a portion of the boundary and Dirichlet condition on the other part  of the boundary,
have been considered, for example in   \cite{Park+Kang_2008,Santos_2001,Vicente_2009}. Condition  $\eqref{eqn-wave}_3$ means that
the right end of $I$ is clamped in a body with viscoelastic properties. For a detailed analysis in this direction, the interested  readers are referred to the articles \cite{Li+Xi_2019,Park+Park_2011}.

The kernel of memory term in $\eqref{eqn-wave}_1$ represents the physical properties of a viscoelasticity medium, which is difficult to determine directly. By taking additional information on the velocity potential $u$, we can reconstruct $k$, provided the velocity potential is taken on a suitable subset of  $I.$ We suppose that such an additional information on $u$ can be represented in an integral form, called \emph{integral overdetermination condition}, by
\begin{align}\label{eqn-integral-overdetermination}
    \int_0^\ell  (\varphi(x)-\beta \varphi''(x))u_{x}(x,t)dx=f(t),\quad t\in [0,T],
\end{align}
where $f(t),t\in [0,T],$ is the measurement data, which represents the average velocity on $I$ and $\varphi$ is a given function representing the type of device used to measure the velocity $u_x$. Further, we can consider $\varphi$ as an internal tiny sensor that measures the average velocity $u_x$ in a very small subset of the domain $I$, that is,  $I':=\mathrm{supp}(\varphi)\subset I $ may be very small. The physical phenomena behind this mathematical issue shows the  averaging effect of measurement data, where both the convolution $\int_0^tk(t-s) u_{xx}(x,s) d s$ and the integral $\int_0^\ell  (\varphi(x)-\beta \varphi''(x))u_{x}(x,t)dx$  stands for some averaging process (cf. \cite{POV,WL}, etc. for more details).

If function $k(\cdot)$ is known, then the initial-boundary value problem \eqref{eqn-wave} of determining $u(\cdot,\cdot)$ and $y(\cdot)$ is called {\it the direct problem}.  {\it The inverse problem} is the problem of determining $u(\cdot,\cdot),\ y(\cdot)$ and $k(\cdot)$ from a system of partial integro-differential equations \eqref{eqn-wave}--\eqref{eqn-integral-overdetermination}. This study relates to the class of one-dimensional inverse problems in the linear dynamical wave processes of memory type. The unknown function in the above problem is the kernel of the integral operator modelling the memory
phenomenon, which is relevant when wave processes propagate in media.

In the theory of inverse problems for a hyperbolic integro-differential equations, those of finding the integral-operator kernel (dependent on time and spatial variables) comprise a field that shaped at the end of the last century \cite{Grasselli+Kabanikhin+Lorenzi_1992,Cavaterra+Grasselli_1994,Bukhgeim_1993,Durdiev_1994,Bukhgeim+Dyatlov_1998}.
A more detailed analysis of the literature on this subject is available in \cite{Durdiev+Totieva_2023}, which is one of the most recent fundamental papers in the field of inverse problems for media with after-effect. It contains the results of studying the well-posedness of a number of one- and
multi-dimensional inverse dynamical problems for hyperbolic integro-differential equations that occur when internal characteristics of media with after-effects are described using measurements of the wave field in accessible domains.

The study of multi-dimensional inverse problems is very limited in the literature. Examining such problems is of both mathematical and application interest. The problems of determining kernel which depends on two or more variables are considered in \cite{Durdiev_2020,Janno+Wolfersdorf_2001,Karchevsky+Fatianov_2001,Lorenzi+Messina+Romanov_2007,Lorenzi+Romanov_2011,Romanov_2012,Romanov_2012_JAIM,Romanov_2014,Romanov+Yamamoto_2010,Totieva_2024}. In \cite{Janno+Wolfersdorf_2001}, the method of separation of variables is used to solve inverse problems in a bounded domain, by which the problems are reduced to a system of integral equations of the Volterra type with respect to unknown functions depending on a time variable. In \cite{Lorenzi+Messina+Romanov_2007,Lorenzi+Romanov_2011,Romanov_2012,Romanov_2012_JAIM,Romanov_2014,Romanov+Yamamoto_2010}, multi-dimensional inverse problems with concentrated sources of perturbation are studied. These problems are reduced to solving problems of integral geometry. The purposes of inverse problems is to determine the spatial parts of kernel and Lame coefficients.

Later in \cite{Totieva_2024}, using the method of scales of Banach spaces for the viscoelasticity equation (in the class of functions analytic
in the variable $x$ and smooth in the variable $t$)  in combination with the method of weighted norms, the global unique solvability of the kernel-determining problem was studied. In recent years, we have witnessed an increase in the number of works on numerical computations of integral operator kernels \cite{Bozorov_2020,Karchevsky+Fatianov_2001,Durdiev_2020,Kaltenbacher+Khristenko+Nikolic+Rajendran+Wohlmuth_2022}.

We also mention that Colombo et al. examined some inverse problems of reconstructing the kernel of memory term for certain scalar parabolic equations and damped wave equations with a memory term in \cite{Colombo+Lorenzi_1998_ADE,Colombo+Lorenzi_1997_JMAA,Colombo_2001_DIE,Colombo_2001_JMAA,Colombo+Guidetti+Lorenzi_2003_AMSA,Colombo+Guidetti+Lorenzi_2003_DSA,Colombo+Guidetti_2002_ZAA,Colombo_2005,Colombo_2007_PD,Colombo_2007_Nonlinearity} and many others. In \cite{Colombo+Guidetti_2007}, Colombo and Guidetti  addressed the results of the uniqueness of solutions and the presence of both local- and global-in-time for an evolution equation with nonlinearity and acceptable growth condition. In their novel approach, the authors rewrite the convolution term as the sum of two linear terms in the unknown. 

The theoretical significance of current study is to obtain the necessary and sufficient conditions for the global unique solvability of the one-dimensional inverse problem \eqref{eqn-wave}--\eqref{eqn-integral-overdetermination}. For the case of $\beta=0$, the result  on local-in-time existence and uniqueness of the kernel determining problem \eqref{eqn-wave}--\eqref{eqn-integral-overdetermination} was obtained in \cite{Totieva+Durdiev_2024}. In the literature (see \cite[Chapter 11]{Whitham_1999}) it has been noticed that the wave processes of a medium with dispersion are described by the system \eqref{eqn-wave} which is the main motivation for considering this work. Equation \eqref{eqn-wave} with $k=0$ appears in the elasticity of longitudinal waves in bars, in water waves in the Boussinesq approximation for long waves, and in plasma waves. The term $u_{xxtt}$ can be replaced in some cases by $u_{xxxx}.$ It depends on the dispersion relation \cite[Chapter 11]{Whitham_1999}. However, for a more accurate description of wave processes in media with dispersion, a kernel $k$ is needed. To the best of our knowledge, no one has explored this type of problem. %\todo{Need to modify this paragraph.}

It is more convenient to study the inverse problem \eqref{eqn-wave}--\eqref{eqn-integral-overdetermination} in terms of the functions $$v:=u_t+z\frac x\ell,\ z:=py'+qy.$$ Equations \eqref{eqn-wave}--\eqref{eqn-integral-overdetermination} in terms of functions $v$, $z$ are given in Subsection \ref{Inverse-problem}. The function $v$ has zero boundary conditions, which is necessary when applying the energy inequality in Section \ref{sec-main-result}.

The organization of this article is as follows: In the next section, we provide necessary function spaces which help us to present a concrete version of the inverse problem. Section \ref{sec-equiv-prob} is devoted to the discussion of an equivalent problem corresponding to the original inverse problem. In the final section, we establish the main result of this article, that is, the global-in-time existence and uniqueness of solutions to the inverse problem corresponding to the system \eqref{eqn-wave}--\eqref{eqn-integral-overdetermination} (see Theorem \ref{thm-4.3}). In the appendix, we discuss the energy estimates for a linear problem corresponding to \eqref{eqn-wave} that have been used to demonstrate the main result of this article.

\section{The concrete version of the inverse problem}\label{sec-inverse-problem}\setcounter{equation}{0}

\subsection{Function spaces} For any integers $m,p$ we denote by $W^{m,p}(I):=W^{m,p}(I;\mathbb{R})$ and $W^{m,p}(0,T):=W^{m,p}(0,T;\mathbb{R})$ for the usual Sobolev spaces defined for spatial variable and time variable, respectively. For a Banach space $\mathrm{X},$ the space $L^p(0,T;\mathrm{X})$ consists the equivalence class of all Lebesgue measurable functions $u:[0,T]\rightarrow \mathrm{X}$ with
$$
\|u\|_{L^p(0,T;\mathrm{X})}:=\left(\int_0^T\|u(t)\|_{\mathrm{X}}^p\,dt\right)^{1/p}<\infty,\quad 1\leq p<\infty.
$$

The Sobolev space $W^{m,p}(0,T;\mathrm{X})$ consists all functions $u\in L^p(0,T;\mathrm{X})$ such that $\frac {\partial^{\gamma} u}{\partial t^{\gamma}}$ exists in the weak sense and belongs to $L^p(0,T;\mathrm{X})$ for all $0\leq \gamma \leq m.$ Let $H^m(I):=W^{m,2}(I),\ H^m(0,T):=W^{m,2}(0,T),$ and
$H^m(0,T;\mathrm{X}):=W^{m,2}(0,T;\mathrm{X}).$ We define $a\wedge b:=\min\{a,b\},\ a \vee b:=\max\{a,b\}.$

As $H^{s}(I)\subset C^k(\overline{I})$, for $s>\frac{1}{2}+k,$ $k=0,1,2\ldots,$ let us represent
\begin{align*}H_0^1(I)&=\{\varphi\in H^1(I): \varphi(0)=\varphi(\ell)=0\},\\
H_0^3(I)&=\{\varphi\in H^3(I): \varphi(0)=\varphi(\ell)=0, \varphi'(0)=\varphi'(\ell)=0,\varphi''(0)=\varphi''(\ell)=0\}.
\end{align*}

\subsection{The inverse problem}\label{Inverse-problem} For $T>0$, the inverse problem is to determine 
%\begin{align}\label{eqn-2.1}
%    u\in H^3(0,\tau; H^2(I)),\  k\in H^1(0,\tau), \text{ and } y\in H^3(0,\tau) 
%\end{align}
%or
\begin{align}\label{eqn-2.2}
    v\in H^2(0,\tau;H^1_0(I)\cap H^2(I)),\  k\in H^1(0,\tau), \text{ and } y\in H^3(0,\tau), \ \tau\in (0,T],
\end{align}
such that $(v,k,y)$ satisfies the system:
\begin{equation}\label{eqn-wave-equiv}
    \left\{
    \begin{aligned}
v_{tt}-v_{xx}-\beta v_{xxtt}+k(t)u''_0(x) & +\int_0^t k(t-s)v_{xx}(x,s)\,ds=z''\frac x\ell,\  && \hspace{-30mm}  (x,t)\in I\times (0,\tau),\\ \\
v\big|_{x=0}&=v\big|_{x=\ell}=0,\quad && \hspace{-30mm} 0\leq t<\tau,\\ \\
\Big[v_x-\int_0^t k(t-s)v_x(x,s)\,ds\Big]_{x=\ell} & =y''(t)+k(t)u'_0(\ell) +\frac 1\ell\left(z(t)-\int_0^tk(t-s)z(s)\,ds\right), \\&\quad \quad && \hspace{-30mm}  0< t<\tau,\\ \\
v\big|_{t=0} & =v_0(x), \ v_t\big|_{t=0}=v_1(x),  \quad && \hspace{-30mm} x\in I,\\ \\
\int_0^\ell (\varphi(x)-\beta \varphi''(x)) & \left[v_x-\frac {z}{\ell}\right]dx=f'(t),\quad && \hspace{-30mm} t\in (0,\tau), 
    \end{aligned}
    \right.
\end{equation}
under the following assumption on the data:
\begin{itemize}
    \item [$(i1)$]$u_0(\cdot), u_1(\cdot)\in H^4(I), \varphi(\cdot)\in H_0^3(I).$
    \item [$(i2)$]  $v_0(x):=u_1(x)-u_1(\ell)\frac x\ell, v_1(x):=u_2(x)-u_2(\ell)\frac x\ell  \text{ for all } x\in I,$  where $u_2(\cdot):=\left(1-\beta \frac{d^2}{dx^2}\right)^{-1}u''_0(\cdot)$,  and $v_0(\cdot),v_1(\cdot)\in H^1_0(I)\cap H^2(I).$
     
    \item [$(i3)$] $\alpha^{-1}:=\int_0^\ell \varphi'(x)u''_0(x)dx\neq 0.$
    \item  [$(i4)$] $f(\cdot)\in H^4(0,T):$
    \begin{align*}
        &\int_0^\ell (\varphi(x)-\beta \varphi''(x))u'_0(x)dx =f(0),\\
       & \int_0^\ell (\varphi(x)-\beta \varphi''(x))u'_1(x)dx  =f'(0),\\
       & \int_0^\ell (\varphi(x)-\beta \varphi''(x))u'_2(x)dx  =f''(0),\\
       &\int_0^\ell (\varphi(x)-\beta \varphi''(x))\left(1-\beta \frac{d^2}{dx^2}\right)^{-1}\left[u'''_1(x)-k(0)u'''_0(x)\right]dx   =f'''(0),
    \end{align*}
with $k(0)=\alpha(f'''(0)-\int_0^\ell v_0(x)\varphi'''(x)dx).$
\item [$(i5)$] $y''(0)=\frac{1}{\psi(\ell)}\left[f'(0)-k(0)f(0)+\int_0^\ell \psi(x)(u''_1(x)-k(0)u''_0(x))dx\right]$\\
with  $\psi(x)=\int_0^x\varphi(x)dx-\beta \varphi'(x)$ and $\psi(\ell)\neq 0.$
\end{itemize}

\smallskip

\begin{remark}\label{Rem-2.1}
    By Sobolev's  embedding theorem, for $j+1/2<s,$  if $ h(\cdot)\in H^s(0,T),$ then $h(\cdot) \in C^j[0,T]$ and from this the equalities in  $(i4)$ and $(i5)$ make sense.
\end{remark}

\begin{remark}\label{Rem-2.2}
    Overdetermination condition \eqref{eqn-integral-overdetermination} may be rewritten as:
    \begin{align}\label{eqn-integral-overdetermination-equiv-1}
      -\int_0^\ell  (\varphi'(x)-\beta\varphi'''(x))u(x,t)dx=f(t),\quad t\in (0,\tau)  
    \end{align}
or
\begin{align}\label{eqn-integral-overdetermination-equiv-2}
    \psi(\ell)u_x(\ell,t)-\int_0^\ell \psi(x)u_{xx}(x)dx=f(t), 
\end{align}
where $\psi(x)$ is given in $(i5).$

{Then we deduce  the equality in $\eqref{eqn-wave-equiv}_3$ using \eqref{eqn-integral-overdetermination-equiv-2} as
\begin{align}\label{eqn-2.6}
    y''(t)=G[v_{xx}](t)- k(t)\widehat{G}[u_{xx}](0) -\int_0^tk(t-s)G[v_{xx}](s)ds,
\end{align}
where $$G[\cdot](t):=\frac{1}{\psi(\ell)}\left(f'(t)+\int_0^\ell \psi(x)[\cdot](x,t)dx\right)$$ 
and 
$$\widehat{G}[\cdot](t):=\frac{1}{\psi(\ell)}\left(f(t)+\int_0^\ell \psi(x)[\cdot](x,t)dx\right).$$}
\iffalse
\begin{align}%\label{eqn-2.6}
    y''(t)=G[v_{xx}](t)-k(t)G[u_{xx}](0)-\int_0^tk(t-s)G[v_{xx}](s)ds,
\end{align}
where $G[\cdot](t):=\frac{1}{\psi(\ell)}\left(f'(t)+\int_0^\ell \psi(x)[\cdot](x,t)dx\right).$

\todokk{I am getting \eqref{eqn-2.6} as follows: 
\begin{align}
    y''(t)=G[v_{xx}](t)-\frac{k(t)}{\psi(\ell)}\left(f(0)+\int_0^\ell \psi(x)u_{xx}(x,0)dx\right) -\int_0^tk(t-s)G[v_{xx}](s)ds.
\end{align}
Please have a look!
}
\fi 
\end{remark}

\subsection{Important inequalities and results} The following inequalities are frequently used in this paper. 

\begin{lemma}[{\cite[Theorem 4.4]{Colombo+Guidetti_2007}}]\label{lem-2.1}
    Let $X$ be a Banach space, $p \in (1, \infty),\ \tau\in \mathbb{R}_+$, $k\in L^p(0,\tau)$, and $f\in L^p(0,\tau; X)$. Then $k \ast f \in L^p(0, \tau ; X)$ and
$$
\|k\ast f\|_{L^p(0, \tau ; X)}\leq \tau^{1-1/p}\|k\|_{L^p(0, \tau)}\|f\|_{L^p(0, \tau ; X)},
$$
where $(k \ast f)(t) = \int_0^t k(t-s)f(s)ds$.
\end{lemma}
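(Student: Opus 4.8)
The plan is to reduce the vector-valued estimate to a scalar convolution inequality and then combine Minkowski's integral inequality with H\"older's inequality. First I would bring the norm inside the Bochner integral: for a.e. $t\in(0,\tau)$,
\[
\|(k\ast f)(t)\|_X \;\le\; \int_0^t |k(t-s)|\,\|f(s)\|_X\,ds \;=\; \bigl(|k|\ast g\bigr)(t), \qquad g(s):=\|f(s)\|_X .
\]
Here $g\in L^p(0,\tau)$ with $\|g\|_{L^p(0,\tau)}=\|f\|_{L^p(0,\tau;X)}$, and the joint measurability of $(t,s)\mapsto k(t-s)f(s)$ together with Fubini's theorem guarantees that $t\mapsto(k\ast f)(t)$ is strongly measurable, so that once the final bound is shown to be finite we do obtain $k\ast f\in L^p(0,\tau;X)$. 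Thus it suffices to prove, for nonnegative $a,b\in L^p(0,\tau)$,
\[
\|a\ast b\|_{L^p(0,\tau)} \;\le\; \tau^{1-1/p}\,\|a\|_{L^p(0,\tau)}\,\|b\|_{L^p(0,\tau)},
\]
and then apply it with $a=|k|$ and $b=g$.

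For this scalar inequality I would write $(a\ast b)(t)=\int_0^\tau \mathbf 1_{\{s<t\}}\,a(t-s)\,b(s)\,ds$ and apply Minkowski's integral inequality in $L^p_t(0,\tau)$, followed by the substitution $r=t-s$:
\[
\|a\ast b\|_{L^p(0,\tau)} \;\le\; \int_0^\tau b(s)\Bigl(\int_s^\tau a(t-s)^p\,dt\Bigr)^{1/p}ds \;=\; \int_0^\tau b(s)\Bigl(\int_0^{\tau-s} a(r)^p\,dr\Bigr)^{1/p}ds \;\le\; \|a\|_{L^p(0,\tau)}\,\|b\|_{L^1(0,\tau)}.
\]
A final application of H\"older's inequality on $(0,\tau)$ gives $\|b\|_{L^1(0,\tau)}\le\tau^{1-1/p}\|b\|_{L^p(0,\tau)}$, and combining the two displays yields the scalar bound; substituting $a=|k|$, $b=g$ and recalling $\|g\|_{L^p(0,\tau)}=\|f\|_{L^p(0,\tau;X)}$ finishes the argument.

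This argument is entirely routine and I do not expect a genuine obstacle; the only points deserving a line of care are the Bochner-integrability bookkeeping in the first step (measurability of the $X$-valued kernel $(t,s)\mapsto k(t-s)f(s)$ and the legitimacy of Fubini, so that $k\ast f$ is a well-defined element of $L^p(0,\tau;X)$), and the role of the hypothesis $p\in(1,\infty)$, which is precisely what makes the closing H\"older step produce the factor $\tau^{1-1/p}$. The same computation in fact also covers $p=1$ with the trivial constant $\tau^{0}=1$.
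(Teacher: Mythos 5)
Your proof is correct and follows essentially the same route as the paper, which simply invokes Young's convolution inequality ($L^1\ast L^p\subseteq L^p$) and then H\"older's inequality; you merely prove the Young step yourself via Minkowski's integral inequality and attend to the vector-valued measurability, which the paper leaves implicit. The only cosmetic difference is that you let the $L^1$-norm (and hence the H\"older step producing $\tau^{1-1/p}$) fall on $\|f(\cdot)\|_X$ while the paper places it on $k$; by the symmetry of the final bound this is immaterial.
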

\begin{proof} Using Young's inequality for convolution and H\"{o}lder's inequality, we have $$\|k\ast f\|_{L^p(0,\tau; X)} \leq \|k\|_{L^1(0,\tau)}\|f\|_{L^p(0, \tau; X)}\leq  \tau^{1-1/p}\|k\|_{L^p(0, \tau)}\|f\|_{L^p(0, \tau ; X)},$$ which completes the proof.\end{proof}
\begin{lemma}[{\cite[Theorem 4.5]{Colombo+Guidetti_2007}}]\label{lem-2.2}
    Let $X$ be a Banach space, $p \in (1, \infty), \tau\in \mathbb{R_+}$,  $w\in W^{1,p}(0, \tau ; X)$ with $w(0) = 0$. Then
    \begin{align*}
\|w\|_{L^{\infty}(0,\tau; X)} & \leq \tau^{1-1/p}\|w_t\|_{L^p(0, \tau; X)},
\\
\|w\|_{L^{p}(0,\tau; X)} & \leq \tau\|w_t\|_{L^p(0, \tau; X)}.
    \end{align*}
\end{lemma}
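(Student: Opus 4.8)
The statement to be proved is Lemma~\ref{lem-2.2}, the standard ``hard Sobolev inequalities'' for functions vanishing at the origin, so the plan is quite short and self-contained.

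\medskip

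\textbf{Approach.} The plan is to use the fundamental theorem of calculus in the vector-valued setting together with H\"older's inequality. For $w \in W^{1,p}(0,\tau;X)$ with $w(0)=0$, the absolute continuity of $t \mapsto w(t)$ (as an $X$-valued function) gives the representation
\begin{equation*}
  w(t) = \int_0^t w_t(s)\, ds, \qquad t \in [0,\tau].
\end{equation*}
This is exactly the $w(0)=0$ case of the Bochner version of the fundamental theorem of calculus, valid because $w_t \in L^p(0,\tau;X) \subset L^1(0,\tau;X)$.

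\medskip

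\textbf{Key steps.} First, for the $L^\infty$ bound, take norms in $X$ inside the integral and apply H\"older's inequality with exponents $p$ and $p' = p/(p-1)$ on the interval $(0,t)$:
\begin{equation*}
  \|w(t)\|_X \le \int_0^t \|w_t(s)\|_X\, ds \le t^{1-1/p}\left(\int_0^t \|w_t(s)\|_X^p\, ds\right)^{1/p} \le \tau^{1-1/p}\,\|w_t\|_{L^p(0,\tau;X)}.
\end{equation*}
Taking the supremum over $t \in [0,\tau]$ yields the first inequality. Second, for the $L^p$ bound, raise the pointwise estimate $\|w(t)\|_X \le t^{1-1/p}\|w_t\|_{L^p(0,t;X)} \le t^{1-1/p}\|w_t\|_{L^p(0,\tau;X)}$ to the $p$-th power and integrate in $t$ over $(0,\tau)$; since $\int_0^\tau t^{p-1}\, dt = \tau^p/p \le \tau^p$, one obtains
\begin{equation*}
  \|w\|_{L^p(0,\tau;X)}^p \le \|w_t\|_{L^p(0,\tau;X)}^p \int_0^\tau t^{p-1}\, dt \le \tau^p\,\|w_t\|_{L^p(0,\tau;X)}^p,
\end{equation*}
and taking $p$-th roots gives the second inequality. (Alternatively, the second inequality follows at once from the first, since $\|w\|_{L^p(0,\tau;X)} \le \tau^{1/p}\|w\|_{L^\infty(0,\tau;X)} \le \tau^{1/p}\cdot\tau^{1-1/p}\|w_t\|_{L^p(0,\tau;X)} = \tau\|w_t\|_{L^p(0,\tau;X)}$.)

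\medskip

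\textbf{Main obstacle.} There is no real obstacle here; the only point requiring a word of care is the justification of the integral representation $w(t) = \int_0^t w_t(s)\,ds$ in the Bochner sense, i.e. that a Sobolev function on an interval with values in a Banach space has an absolutely continuous representative and equals the integral of its weak derivative plus its initial value. This is classical (see e.g. standard references on vector-valued Sobolev spaces), and once it is invoked the rest is a one-line application of H\"older. Since this lemma is quoted from \cite{Colombo+Guidetti_2007}, it would also be legitimate simply to cite it; the short proof above is included for completeness.
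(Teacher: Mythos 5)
Your proof is correct and follows essentially the same route as the paper, which sketches the argument via the representation $w=1\ast w_t$ (your fundamental-theorem-of-calculus identity $w(t)=\int_0^t w_t(s)\,ds$) combined with H\"older's and Young's inequalities. Your derivation of the second bound by integrating the pointwise estimate (or from the $L^\infty$ bound) is a trivial variant of the paper's appeal to Young's convolution inequality, so there is nothing to add.
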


The proof can be easily concluded from H\"{o}lder's inequality and Young's inequality for
convolution, using the formula $w = 1\ast w_t$ with $w(0)=0$.

\section{The Equivalent Problem}\label{sec-equiv-prob}\setcounter{equation}{0}

{\begin{lemma}\label{lem-3.1}
    Let the assumptions $(i1)$-$(i5)$ hold. Let $(u,k,y)$ be a solution of the system \eqref{eqn-wave}--\eqref{eqn-integral-overdetermination} such that
    \begin{align}\label{eqn-u-regularity}
        u\in H^3(0,T; H^2(I)),\  k\in H^1(0,T), \text{ and } y\in H^3(0,T). 
    \end{align}
Then $(v:=u_t+z\frac{x}{\ell},k,y)$ satisfies
\begin{align}\label{eqn-3.1}
    v\in H^2(0,T;H_0^1(I)\cap H^2(I)),\  k\in H^1(0,T),\ y\in H^3(0,T)
\end{align}
and solves the system
\begin{align}
v_{tt}-v_{xx}-\beta v_{xxtt}+k(t)u''_0(x) & +\int_0^t k(t-s)v_{xx}(x,s)\,ds=z''\frac x\ell,\ &&(x,t)\in I\times (0,T),\label{eqn-3.2}\\
v\big|_{x=0} & =v\big|_{x=\ell}=0, \quad && 0<t<\tau, \label{eqn-3.3}\\
v\big|_{t=0} & =v_0(x), \ v_t\big|_{t=0}=v_1(x), \quad && x\in I, \label{eqn-3.4}
\end{align}
with
\begin{align}
    k'(t)& =\alpha\bigg\{f^{(iv)}(t)+\int_0^\ell v_t(x,t)\varphi'''(x)dx
-k(0)\int_0^\ell v(x,t)\varphi'''(x)dx
\nonumber \\ & \quad 
-\int_0^t\int_0^\ell k'(t-s) v(x,s)\varphi'''(x)dxds\bigg\}, \label{eqn-3.5}\\
    y'''(t) & =G'[v_{xx}](t)- k'(t) \widehat{G}[u_{xx}](0)
-k(0)G[v_{xx}](t)
 -\int_0^tk'(t-s)G[v_{xx}](s)\,ds, \label{eqn-3.6}
\end{align}
for $ 0< t<T$, where $z''=py'''+qy''.$

On the other hand, if $(v,k,y)$ satisfies \eqref{eqn-3.1} and is a solution to the system \eqref{eqn-3.2}--\eqref{eqn-3.6}, then under the setting $u(t):= u_0+ \int_0^t [v(s)-z(s)\frac{x}{\ell}]ds$, $(u,k,y)$ satisfies \eqref{eqn-u-regularity} and is a solution to the system \eqref{eqn-wave}--\eqref{eqn-integral-overdetermination}.
\end{lemma}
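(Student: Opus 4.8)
The plan is to verify the two implications separately by direct manipulation, exploiting two structural features of the substitution $v=u_t+z\tfrac{x}{\ell}$, $z=py'+qy$. First, since $y\in H^3(0,T)$ forces $z\in H^2(0,T)$ and the spatial factor $x\mapsto x/\ell$ is a fixed smooth function on the bounded interval $I$, the correspondence $u\mapsto v$ sends $H^3(0,T;H^2(I))$ into $H^2(0,T;H^2(I))$, while conversely $v\mapsto u:=u_0+\int_0^t\big(v(s)-z(s)\tfrac{x}{\ell}\big)\,ds$ sends $H^2(0,T;H^2(I))$ into $H^3(0,T;H^2(I))$; this accounts for the regularity claims \eqref{eqn-3.1} and \eqref{eqn-u-regularity}. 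Second, because $z\tfrac{x}{\ell}$ is affine in $x$ one has the identity $u_{xxt}=(u_t)_{xx}=v_{xx}$, so every spatial second derivative in \eqref{eqn-wave} can be rewritten through $v$. The homogeneous boundary conditions \eqref{eqn-3.3} are forced by the choice of $z$: $u\big|_{x=0}\equiv 0$ gives $v\big|_{x=0}=0$, and $u_t\big|_{x=\ell}=-z$ gives $v\big|_{x=\ell}=-z+z=0$, so $v(\cdot,t)\in H_0^1(I)\cap H^2(I)$; together with $(i1)$ one then checks $v_0,v_1\in H_0^1(I)\cap H^2(I)$ directly from the formulas in $(i2)$.

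For the forward implication, differentiate $\eqref{eqn-wave}_1$ once in $t$. Using the Leibniz rule $\frac{d}{dt}\int_0^t k(t-s)u_{xx}(x,s)\,ds=k(t)u_0''(x)+\int_0^t k(t-s)v_{xx}(x,s)\,ds$ (which relies on $u_{xx}(x,0)=u_0''(x)$ and $u_{xxt}=v_{xx}$) and substituting $u_{ttt}=v_{tt}-z''\tfrac{x}{\ell}$ and $u_{xxttt}=v_{xxtt}$, one obtains exactly \eqref{eqn-3.2}. The initial conditions \eqref{eqn-3.4} follow by evaluating $v$ and $v_t$ at $t=0$: $\eqref{eqn-wave}_4$ at $t=0$ gives $z(0)=-u_1(\ell)$, hence $v(\cdot,0)=u_1-u_1(\ell)\tfrac{x}{\ell}=v_0$; differentiating $v$ once in time and using $\eqref{eqn-wave}_1$ at $t=0$ (which reads $(1-\beta\tfrac{d^2}{dx^2})u_{tt}(\cdot,0)=u_0''$, whence $u_{tt}(\cdot,0)=u_2$) together with $\eqref{eqn-wave}_4$ differentiated ($z'(0)=-u_2(\ell)$) gives $v_t(\cdot,0)=u_2-u_2(\ell)\tfrac{x}{\ell}=v_1$.

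The two auxiliary relations \eqref{eqn-3.5}--\eqref{eqn-3.6} come from differentiating, respectively, the overdetermination condition and the memory boundary condition. Starting from \eqref{eqn-integral-overdetermination} written in the form \eqref{eqn-integral-overdetermination-equiv-1} (the integration by parts in $x$ being licit since $\varphi\in H_0^3(I)$), differentiating in $t$ a sufficient number of times so that the memory term's boundary contribution $k(t)u_0''$ becomes visible, replacing $u_t,u_{tt},\dots$ by $v$ and its $t$-derivatives, and using \eqref{eqn-3.2} together with the relevant integrations by parts against $\varphi$, one arrives at an identity whose only term carrying $k(t)$ has coefficient $\int_0^\ell\varphi'(x)u_0''(x)\,dx=\alpha^{-1}$, nonzero by $(i3)$; solving for $k(t)$ and differentiating once more gives \eqref{eqn-3.5}, and the evaluations at $t=0$ of the intermediate identities are exactly the compatibility relations in $(i4)$, the last of which also fixes $k(0)=\alpha\big(f'''(0)-\int_0^\ell v_0\varphi'''\,dx\big)$. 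For $y$, inserting \eqref{eqn-integral-overdetermination-equiv-2} into $\eqref{eqn-wave}_3$ yields the second-order relation \eqref{eqn-2.6}, and differentiating it once (again via the Leibniz rule for the convolution) produces \eqref{eqn-3.6}, with $(i5)$ recording its value $y''(0)$ at $t=0$. This finishes the forward direction.

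For the converse, put $u:=u_0+\int_0^t\big(v(s)-z(s)\tfrac{x}{\ell}\big)\,ds$, so that $u_t=v-z\tfrac{x}{\ell}$, $u(\cdot,0)=u_0$, and $u$ has the regularity \eqref{eqn-u-regularity}. Integrating \eqref{eqn-3.2} in $t$ over $(0,t)$ recovers $\eqref{eqn-wave}_1$, the constant of integration vanishing because $\eqref{eqn-wave}_1$ holds at $t=0$ by the construction of $u_2$ in $(i2)$ and the first relation of $(i4)$; $\eqref{eqn-wave}_2$ and $\eqref{eqn-wave}_4$ follow from $v\big|_{x=0}=v\big|_{x=\ell}=0$ and the definitions of $v$ and $z$ (with $y(0),y'(0)$ matching the values forced by $\eqref{eqn-wave}_3$--$\eqref{eqn-wave}_4$ at $t=0$), $\eqref{eqn-wave}_5$ holds by the definition of $u$ and $(i2)$, \eqref{eqn-integral-overdetermination} is recovered by integrating \eqref{eqn-3.5} in $t$ as many times as it was differentiated, the constant at each stage being pinned down successively by $(i4)$, and $\eqref{eqn-wave}_3$ is recovered from \eqref{eqn-3.6} upon one integration in $t$ with constant $y''(0)$ furnished by $(i5)$. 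The main difficulty I anticipate lies precisely in this bookkeeping of constants of integration in the converse direction: each time-differentiation performed in the forward step has to be inverted, and one must check that the four relations of $(i4)$ together with $(i5)$ are exactly what annihilate all the $t=0$ boundary terms, so that the reconstructed triple $(u,k,y)$ solves the original, undifferentiated system \eqref{eqn-wave}--\eqref{eqn-integral-overdetermination} rather than only a differentiated version of it. A subsidiary but unavoidable point throughout is the legitimacy of the integrations by parts against $\varphi$, guaranteed by $\varphi\in H_0^3(I)$, and the membership $v_0,v_1\in H_0^1(I)\cap H^2(I)$, which follows from $(i1)$-$(i2)$.
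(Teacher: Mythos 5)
Your proposal is correct and takes essentially the same route as the paper: the forward direction by differentiating $\eqref{eqn-wave}_1$ in $t$ (with the Leibniz rule producing $k(t)u_0''$), evaluating at $t=0$ for \eqref{eqn-3.4}, isolating $k$ by testing against $\varphi'$ and using $(i3)$ (the paper's identity \eqref{eqn-3.7}), and differentiating \eqref{eqn-2.6} for \eqref{eqn-3.6}; the converse by integrating each relation back in $t$ with the constants of integration annihilated via $(i2)$, $(i4)$ and $(i5)$, exactly as in the paper's Step 2. Your bookkeeping of which compatibility condition kills which constant is stated at the same level of detail as the paper's own proof, so no essential difference remains.
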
}
\begin{proof}
    {\bf Step 1.} Suppose that the system \eqref{eqn-wave}--\eqref{eqn-integral-overdetermination} has a solution $(u,k,y)$ satisfying \eqref{eqn-u-regularity}.
Then, it is immediate to see that $(v,k,y)$ satisfies condition \eqref{eqn-3.1} and equations \eqref{eqn-3.2}, \eqref{eqn-3.3}.

From the equations $\eqref{eqn-wave}_1$, $\eqref{eqn-wave}_5$, we obtain for all $x\in I$
\begin{align*}
    v(x,0)& =u_1(x)-u_1(\ell)\frac x\ell=v_0(x),
\\
 u_{tt}(x,0) & =u_2(x),
\\
 v_t(x,0) & =u_2(x)-u_2(\ell)\frac x\ell=v_1(x).
\end{align*}
Therefore, we have \eqref{eqn-3.4}.

Taking the inner product in \eqref{eqn-3.2} with $\varphi'$, integrating by parts, and using assumptions $(i1)$ and $(i3)$, we get
\begin{align}\label{eqn-3.7}
    k(t)=\alpha\Bigg\{f'''(t)+\int_0^\ell v(x,t)\varphi'''(x)dx
-\int_0^t\int_0^\ell k(t-s) v(x,s)\varphi'''(x)dxds\Bigg\},
\end{align}
where $f'''(t)=-\int_0^\ell (\varphi'(x)-\beta \varphi'''(x)\left(v_{tt}-z''\frac x\ell\right)dx$ and after differentiating by $t,$ the equation \eqref{eqn-3.5} follows.

Using \eqref{eqn-2.6} in Remark \ref{Rem-2.2} and then differentiating it by $t$, the equation \eqref{eqn-3.6} follows.

\vskip 2mm
\noindent
{\bf Step 2.} Suppose that the system \eqref{eqn-3.2}--\eqref{eqn-3.6} has a solution $(v,k,y)$ satisfying \eqref{eqn-3.1}.  It can be easily shown that \eqref{eqn-u-regularity} and $\eqref{eqn-wave}_2$--$\eqref{eqn-wave}_5$ hold. Since $v=u_t+z\frac x\ell$, the equation \eqref{eqn-3.2}  can be rewritten as
\begin{align*}
    \frac{\partial}{\partial t}\left(u_{tt}+z'\frac x\ell-u_{xx}-\beta u_{xxtt}-\int_0^tk(s)u_{xx}(x,t-s)ds\right)=z''\frac x\ell.
\end{align*}
Integrating the above equation, we obtain
\begin{align*}
    u_{tt}+z'\frac x\ell-u_{xx}-\beta u_{xxtt}-\int_0^tk(t-s)u_{xx}ds=z'\frac x\ell+C_1,
\end{align*}
where $C_1$ is a constant. Taking $t=0$, we have $C_1=0$. Hence, we obtain $\eqref{eqn-wave}_1$.

The equation \eqref{eqn-3.5} for $k'$ can be rewritten as
\begin{align*}
    f^{(iv)}(t)&=\frac{\partial}{\partial t}\Biggl(-\int_0^\ell u_{xxt}(x,t)\varphi'(x)dx+k(0)\int_0^\ell u_{xx}\varphi'(x)dx
\\
& \quad +\int_0^t\int_0^\ell k'(t-s)\varphi'(x)u_{xx}(x,s)dxds\Biggr).
\end{align*}
Integrating this equation with respect to $t$, we find
\begin{align*}
    f'''(t) & =-\int_0^\ell u_{xxt}(x,t)\varphi'(x)dx+k(0)\int_0^\ell u_{xx}\varphi'(x)dx \\ 
& \quad +\int_0^t\int_0^{\ell}k'(t-s)\varphi'(x)u_{xx}(x,s)dxds+C_2,
\end{align*}
where $C_2$ is a constant. Taking $t=0$, and using assumption $(i4)$, we obtain $C_2=0$.
At the same time
\begin{align*}
    f'''(t)=\frac{\partial}{\partial t}\left(-\int_0^\ell u_{xx}(x,t)\varphi'(x)dx+\int_0^t\int_0^{\ell}k(t-s)\varphi'(x)u_{xx}(x,s)dxds\right).
\end{align*}
Integrating with respect to $t,$ we have
\begin{align*}
    f''(t)=-\int_0^\ell u_{xx}(x,t)\varphi'(x)dx+\int_0^t\int_0^\ell k(t-s)\varphi'(x)u_{xx}(x,s)dxds+C_3.
\end{align*}
Taking $t=0$, and using  assumption $(i4)$, we get $C_3=0$, so that 
\begin{align*}
    f''(t)=-\int_0^\ell \varphi'(x)u_{xx}(x,t)dx+\int_0^t\int_0^\ell k(t-s)\varphi'(x)u_{xx}(x,s)dxds.
\end{align*}
Using $\eqref{eqn-wave}_1$ in the above equation along with assumption $(i1)$, we have
\begin{align*}
    f''(t)=-\int_0^\ell \varphi'(x)(u_{tt}(x,t)-\beta u_{xxtt}(x,t))dx=-\frac{\partial}{\partial t}\int_0^\ell \varphi'(x)(u_{t}(x,t)-\beta u_{xxt}(x,t))dx.
\end{align*}
Integrating twice the above equation, taking $t=0$ and using assumption $(i4)$, we find
\begin{align*}
    f(t)=-\int_0^\ell \varphi'(x)(u(x,t)-\beta u_{xx}(x,t))dx=-\int_0^\ell \varphi'(x)u(x,t)dx+\int_0^\ell \beta \varphi'''(x)u(x,t)dx,
\end{align*}
which is \eqref{eqn-integral-overdetermination-equiv-1} and hence \eqref{eqn-integral-overdetermination}.

The equivalence of equalities \eqref{eqn-3.6} and \eqref{eqn-wave-equiv} is proved in a similar way using  assumption $(i5)$. This completes the proof of Lemma \ref{lem-3.1}.
\end{proof}

\section{The Main Result}\label{sec-main-result}\setcounter{equation}{0}

From the equivalent problem \eqref{eqn-3.1}--\eqref{eqn-3.6}, we see that the unknown function $y'''(t)$ can easily be excluded from the system \eqref{eqn-3.1}--\eqref{eqn-3.6}, since it is expressed in terms of $k$ and $v.$ Therefore, the main result will be proved for functions $k,v$.
\medskip

The global solvability  technique used in this section is adapted from \cite{Kumar+Kinra+Mohan_2021}.
\begin{theorem}[Local-in-time existence]\label{thm-4.1}
    Let assumptions $(i1)$-$(i5)$ hold. Then there exist $\tau \in (0,T)$, such that the inverse problem \eqref{eqn-3.1}--\eqref{eqn-3.6} has a unique solution 
    \begin{align*}
    (v,k)\in H^2(0,\tau;H_0^1(I)\cap H^2(I))\times H^1(0,\tau).
\end{align*}
\end{theorem}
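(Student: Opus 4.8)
The plan is to reformulate \eqref{eqn-3.1}--\eqref{eqn-3.6} as a fixed-point equation on the Banach space $\mathcal{X}_\tau:=H^2(0,\tau;H_0^1(I)\cap H^2(I))\times H^1(0,\tau)$ and to apply the contraction mapping principle on a closed ball $\mathcal{B}_R\subset\mathcal{X}_\tau$ for $\tau$ sufficiently small, following the scheme of \cite{Kumar+Kinra+Mohan_2021}. The first reduction is to remove $y$ from the list of unknowns: the constant $k(0)$ is frozen by $(i4)$, the value $y''(0)$ is frozen by $(i5)$, equation \eqref{eqn-3.6} gives $y'''$ explicitly as an affine functional of $(k,k',v_{xx})$, and $y''(t)=y''(0)+\int_0^t y'''(s)\,ds$; hence the source $z''\tfrac{x}{\ell}=(py'''+qy'')\tfrac{x}{\ell}$ in \eqref{eqn-3.2} is a known functional $\mathcal{F}[v,k]$ of $(v,k)$, and $y$ is reconstructed by three integrations once $(v,k)$ has been determined.

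Given $(\bar v,\bar k)\in\mathcal{B}_R$, define $\Phi(\bar v,\bar k)=(v,k)$ componentwise. For the kernel, integrate \eqref{eqn-3.5} in time with $(\bar v,\bar k)$ frozen on the right, so that
\[
k(t)=k(0)+\alpha\!\int_0^t\!\Big(f^{(iv)}(s)+\!\int_0^\ell \bar v_s\,\varphi'''\,dx-k(0)\!\int_0^\ell \bar v\,\varphi'''\,dx-\!\int_0^s\!\!\int_0^\ell \bar k'(s-r)\,\bar v(x,r)\,\varphi'''\,dx\,dr\Big)ds
\]
is an explicit formula defining $k\in H^1(0,\tau)$. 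For $v$, let it be the unique solution, furnished by the linear energy estimate of the Appendix, of the linear integro-differential problem
\[
v_{tt}-v_{xx}-\beta v_{xxtt}+\int_0^t\bar k(t-s)\,v_{xx}(x,s)\,ds=\mathcal{F}[\bar v,\bar k]-\bar k(t)u_0''(x),\quad v|_{x=0}=v|_{x=\ell}=0,\quad v|_{t=0}=v_0,\ v_t|_{t=0}=v_1,
\]
i.e.\ the kernel in the memory operator is frozen to $\bar k$ and the forcing depends only on the data and on $(\bar v,\bar k)$.

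The two remaining steps are routine given the $\tau$-smallness encoded in Lemmas \ref{lem-2.1} and \ref{lem-2.2}. \emph{Self-mapping:} using Lemma \ref{lem-2.2} to control $k-k(0)$ and $v-v_0-tv_1$ by their time-derivatives with a gain of powers of $\tau$, bounding every convolution $\bar k\ast(\cdot)$ and $\bar k'\ast(\cdot)$ by Lemma \ref{lem-2.1} (factor $\tau^{1-1/p}$), and inserting the resulting right-hand side into the Appendix estimate for the $v$-subproblem, one obtains $\|\Phi(\bar v,\bar k)\|_{\mathcal{X}_\tau}\le C_{\mathrm{data}}+\tau^{\theta}C(R)$ for some $\theta>0$; taking $R:=2C_{\mathrm{data}}$ and then $\tau$ small gives $\Phi(\mathcal{B}_R)\subseteq\mathcal{B}_R$. \emph{Contraction:} the $k$- and $v$-subproblems depend affinely on $(\bar v,\bar k)$ except through the bilinear couplings $\bar k\ast\bar v_{xx}$ and $\bar k'\ast\bar v$, so for two inputs the difference of outputs solves the same linear problems with a forcing that is $O(\tau^{\theta})$ times the difference of inputs, whence $\|\Phi(\bar v_1,\bar k_1)-\Phi(\bar v_2,\bar k_2)\|_{\mathcal{X}_\tau}\le \tau^{\theta}C(R)\|(\bar v_1,\bar k_1)-(\bar v_2,\bar k_2)\|_{\mathcal{X}_\tau}$, a strict contraction after one further shrinking of $\tau$. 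Banach's theorem then yields a unique fixed point $(v,k)\in\mathcal{B}_R$ solving \eqref{eqn-3.1}--\eqref{eqn-3.6}; uniqueness in the whole space (not merely in the ball) follows from a Gr\"onwall/Volterra comparison of two solutions on a common small interval.

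The step I expect to be the main obstacle is the $v$-subproblem, namely the Appendix energy estimate in the norm $H^2(0,\tau;H_0^1(I)\cap H^2(I))$ with a constant bounded as $\tau\to0$: one must differentiate $v_{tt}-v_{xx}-\beta v_{xxtt}+\bar k\ast v_{xx}=g$ twice in $t$ and close an energy argument. The term $-\beta v_{xxtt}$ is in fact favourable — it upgrades the spatial regularity and gives natural control of $v_{tt}$ in $H^1_0(I)$ — but this double $t$-differentiation is precisely what forces the compatibility conditions $(i4)$–$(i5)$ and the $H^4$-regularity in $(i1)$, so that the traces $v(\cdot,0),v_t(\cdot,0),v_{tt}(\cdot,0)$ lie in $H_0^1(I)\cap H^2(I)$ and the differentiated forcing stays in the space the estimate requires. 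A secondary technical point is checking that $\mathcal{F}[\bar v,\bar k]$ and $\bar k(t)u_0''$ genuinely land in the correct right-hand-side space for the energy estimate, and that $\bar k\ast v_{xx}$ is handled using only the surviving $H^1$-regularity of $\bar k$; this is where $\varphi\in H_0^3(I)$ and $u_0,u_1\in H^4(I)$ are used.
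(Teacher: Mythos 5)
Your overall architecture coincides with the paper's: eliminate $y$ through \eqref{eqn-3.6} together with $y''(t)=y''(0)+\int_0^t y'''(s)\,ds$, produce the new kernel by an explicit formula with the input pair frozen on the right-hand side, solve a linear problem for the new $v$, gain powers of $\tau$ from Lemmas \ref{lem-2.1} and \ref{lem-2.2}, and close with the contraction mapping principle on a ball; this is exactly the scheme of the paper's proof of Theorem \ref{thm-4.1}, whose set $Q(\tau,M)$ of pairs $(\tilde v,\tilde k')\in H^2(0,\tau;H_0^1(I)\cap H^2(I))\times L^2(0,\tau)$ is your ball $\mathcal{B}_R$ up to reconstructing $\tilde k(t)=k(0)+\int_0^t\tilde k'(s)\,ds$.

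The one genuine divergence is in the $v$-subproblem, and there your proposal as written has a gap. You keep the memory term acting on the unknown, i.e.\ you ask for the solution of $v_{tt}-v_{xx}-\beta v_{xxtt}+\bar k\ast v_{xx}=\mathcal{F}[\bar v,\bar k]-\bar k\,u_0''$, and you assert that this solution is furnished by the linear energy estimate of the Appendix. It is not: Appendix \ref{linear-EE} treats only $v_{tt}-v_{xx}-\beta v_{xxtt}=K$, with no convolution in the operator, so neither solvability nor the $H^2(0,\tau;H_0^1(I)\cap H^2(I))$ bound for your integro-differential subproblem follows from it; you yourself flag this as the expected main obstacle but do not close it. The paper sidesteps the issue entirely by freezing \emph{both} factors of the memory term, i.e.\ placing $\int_0^t\tilde k(t-s)\tilde v_{xx}(x,s)\,ds$ into the forcing $K$ (see \eqref{eqn-4.2}); then the $v$-subproblem is literally \eqref{eqn-A.1}, the Appendix applies verbatim, and the convolution is estimated inside $K$ via Lemma \ref{lem-2.1} with the same $\tau^{1/2}$ gain you want, the bilinear couplings then producing the $O(\tau^{\theta})$ contraction factor as in \eqref{eqn-4.8}--\eqref{eqn-4.17}. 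Your route can be repaired, either by proving a separate well-posedness and energy estimate for the Volterra-perturbed equation (absorbing $\bar k\ast v_{xx}$ with Lemma \ref{lem-2.1} and Gr\"onwall, with constants uniform in $\tau\le T$), or more simply by adopting the paper's freezing, after which the rest of your self-map and contraction estimates go through unchanged. A minor remark: the contraction gives uniqueness only within the ball; your Gr\"onwall comparison for uniqueness of the local solution in the whole space is a sensible supplement, which the paper in effect defers to Theorem \ref{thm-4.2}.
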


\begin{proof}[Proof of Theorem \ref{thm-4.1}]
 Let $Q(\tau,M)$ be the space of functions
 $(\tilde v,\tilde k')\in H^2(0,\tau;H_0^1(I)\cap H^2(I))\times L^2(0,\tau)$ such that
\begin{align*}
    \|\tilde v\|_{H^2(0,\tau;H_0^1(I)\cap H^2(I))}+\|\tilde k'\|_{L^2(0,\tau)}\leq M,
\end{align*}
 and \eqref{eqn-3.3}--\eqref{eqn-3.6} hold. The constant $M>0$ will be determined later.

We define the mapping $A: Q(\tau, M)\rightarrow  Q(\tau, M)$ such that $(\tilde v,\tilde k')\rightarrow  (v,k')$ through
\begin{align}
    k'(t) & =\alpha\Bigg\{f^{(iv)}(t)-\int_0^\ell  \tilde v_t(x,t)\varphi'''(x)dx
-k(0)\int_0^\ell \tilde v(x,t)\varphi'''(x)dx
\nonumber  \\
 & \qquad \quad -\int_0^t\int_0^\ell  \tilde k'(t-s) \tilde v(x,s)\varphi'''(x)dxds\Bigg\},
\label{eqn-4.1}
\end{align}
and the initial boundary value problem
\begin{equation}\label{eqn-4.2}
    \left\{
    \begin{aligned}
        v_{tt}-v_{xx}-\beta v_{xxtt}+\tilde k(t)u''_0(x)+\int_0^t \tilde k(t-s)\tilde v_{xx}(x,s)\,ds & =z''\frac x\ell, && (x,t)\in I\times (0,T),
\\
v\big|_{x=0}=v\big|_{x=\ell} & =0,  && 0<t<T,
\\
v\big|_{t=0}=v_0(x), \ v_t\big|_{t=0} & =v_1(x),   &&  x\in I,
    \end{aligned}
    \right.
\end{equation}
with
\begin{align}\label{eqn-4.3}
    y'''(t) & =G'[\tilde v_{xx}](t)  -  k'(t) \widehat{G}[u_{xx}](0)-k(0)G[\tilde v_{xx}](t)
 -\int_0^t\tilde k'(t-s)G[\tilde v_{xx}](s)\,ds, 
\end{align}
for $ 0< t<T.$ In the above equalities $\tilde k (t)=k(0)+\int_0^t \tilde k'(s)ds.$ 

Our aim is to show that the mapping $A: Q(\tau, M)\rightarrow  Q(\tau, M)$ is a \emph{contraction map.}

\vskip 2mm
\noindent
{\bf Step 1.} Firstly, we show that the map $A$ is well-defined for an appropriate choice of $M$ and $\tau.$
Therefore from \eqref{eqn-4.1}, we find
\begin{equation}\label{eqn-4.4}
    \begin{aligned}
       & \|k'\|_{L^2(0,\tau)}
       \\ & \leq \alpha \Bigg\{\|f^{(iv)}(t)\|_{L^2(0,\tau)}+\|\tilde v_t\|_{L^2(0,\tau;L^2(I))}\|\varphi'''\|_{L^2(I)}
+|k(0)|\|\tilde v\|_{L^2(0,\tau;L^2(I))}\|\varphi'''\|_{L^2(I)}
\\ & \qquad  +\left\|\int_0^t\int_0^\ell \tilde k'(t-s) \tilde v(x,s)\varphi'''(x)dxds\right\|_{L^2(0,\tau)}\Bigg\}.
    \end{aligned}
\end{equation}

Using Lemma \ref{lem-2.1} for $p=2$ and $X=L^2(I)$,
%\begin{align*}
 %   \|k \ast f\|_{L^2(0,\tau;L^2(I))}\leq \tau^{1/2}\|k\|_{L^2(0,\tau)}\|f\|_{L^2(0,\tau;L^2(I))},
%\end{align*}
we get
\begin{align}\label{eqn-4.5}
\left\|\int_0^t\int_0^\ell \tilde k'(t-s) \tilde v(x,s)\varphi'''(x)dxds\right\|_{L^2(0,\tau)} 
         & = \left\|\int_0^\ell  [\tilde k'\ast \tilde v(x)]\varphi'''(x)dx\right\|_{L^2(0,\tau)} 
        \nonumber\\ & \leq \left\|k'\ast \tilde v\right\|_{L^2(0,\tau;L^2(I))}\|\varphi'''\|_{L^2(I)}
        \nonumber\\ & \leq \tau^{1/2}\|\tilde k'\|_{L^2(0,\tau)}\|\varphi'''\|_{L^2(I)}\|\tilde v\|_{L^2(0,\tau;L^2(I))}.
\end{align}

An application of Lemma \ref{lem-2.2} implies
\begin{align}
        \|\tilde v\|_{L^2(0,\tau;L^2(I))} & \leq \|\tilde v-v_0\|_{L^2(0,\tau;L^2(I))}+\|v_0\|_{L^2(0,\tau;L^2(I))}
\nonumber\\ 
& \leq\tau \|(\tilde v-v_0)_t\|_{L^2(0,\tau;L^2(I))}+\tau^{1/2}\|v_0\|_{L^2(I)}
\nonumber\\
& =\tau \|\tilde v_t\|_{L^2(0,\tau;L^2(I))}+\tau^{1/2}\|v_0\|_{L^2(I)}
\nonumber\\ & \leq \tau (\tau\|\tilde v_{tt}\|_{L^2(0,\tau;L^2(I))}+\tau^{1/2}\|v_1\|_{L^2(I)})
 +\tau^{1/2}\|v_0\|_{L^2(I)}
\nonumber\\ & =\tau^2\|\tilde v_{tt}\|_{L^2(0,\tau;L^2(I))}+\tau^{3/2}\|v_1\|_{L^2(I)}+\tau^{1/2}\|v_0\|_{L^2(I)},\label{eqn-4.6}\\
        \|\tilde v_t\|_{L^2(0,\tau;L^2(I))} & \leq \|\tilde v_t-v_1\|_{L^2(0,\tau;L^2(I))}+\|v_1\|_{L^2(0,\tau;L^2(I))}
\nonumber\\ & 
\leq\tau \|(\tilde v_t-v_1)_t\|_{L^2(0,\tau;L^2(I))}+\tau^{1/2}\|v_1\|_{L^2(I)}
\nonumber\\ & 
=\tau \|\tilde v_{tt}\|_{L^2(0,\tau;L^2(I))}+\tau^{1/2}\|v_1\|_{L^2(I)}.\label{eqn-4.7}
\end{align}

\medskip
Using \eqref{eqn-4.5}-\eqref{eqn-4.7} in \eqref{eqn-4.4} and then by applying the definition of the space $Q(\tau,M)$, we arrive at:
\begin{align}\label{eqn-4.8}
&\|k'\|_{L^2(0,\tau)}
\nonumber\\ & \leq \alpha \Biggr\{\|f^{(iv)}\|_{L^2(0,\tau)}+\|\varphi'''\|_{L^2(I)}\Big[\tau\|\tilde v_{tt}\|_{L^2(0,\tau;L^2(I))}+\tau^{1/2}\|v_1\|_{L^2(I)}
\nonumber \\ &
\quad +(\tau^2\|\tilde v_{tt}\|_{L^2(0,\tau;L^2(I))}+\tau^{3/2}\|v_1\|_{L^2(I)}+\tau^{1/2}\|v_0\|_{L^2(I)})(|k(0)|+\tau^{1/2}\|\tilde k'\|_{L^2(0,\tau)})\Big]\Biggr\}
\nonumber \\ &
\leq \alpha \Biggr\{\|f^{(iv)}\|_{L^2(0,\tau)}+\|\varphi'''\|_{L^2(I)}\Big[\tau M+\tau^{1/2}\|v_1\|_{L^2(I)}
\nonumber \\ &
\quad +(\tau^2M+\tau^{3/2}\|v_1\|_{L^2(I)}+\tau^{1/2}\|v_0\|_{L^2(I)})(|k(0)|+\tau^{1/2}M)\Big]\Biggr\}.
\end{align}
Next we estimate $y'''$ from \eqref{eqn-4.3} as
\begin{align}\label{eqn-4.9}
& \|y'''\|_{L^2(0,\tau)}
 \nonumber\\ &\leq \|G'[\tilde v_{xx}]\|_{L^2(0,\tau)}+|\widehat{G}[u_{xx}](0)|\|k'\|_{L^2(0,\tau)}
+|k(0)|\|G[\tilde v_{xx}]\|_{L^2(0,\tau)}
\nonumber\\ & 
\quad +\tau^{1/2}\|\tilde k'\|_{L^2(0,\tau)}\|G[v_{xx}]\|_{L^2(0,\tau)}
\nonumber\\ & \leq \frac{1}{|\psi(\ell)|}\Big(\|f''\|_{L^2(0,\tau)}
 +\|\psi\|_{L^2(I)}(\tau \|\tilde v\|_{H^2(0,\tau;H^2(I))}+\tau^{1/2}\|v_1\|_{L^2(I)})\Big)
 \nonumber\\ & 
\quad +|\widehat{G}[u_{xx}](0)|\|k'\|_{L^2(0,\tau)}
 +\frac{1}{|\psi(\ell)|}(|k(0)|+\tau^{1/2}\|\tilde k'\|_{L^2(0,\tau)}) \Big(\|f'\|_{L^2(0,\tau)}
\nonumber\\ & 
\quad +\|\psi\|_{L^2(I)}(\tau^2\|\tilde v\|_{H^2(0,\tau;H^2(I))}+\tau^{3/2}\|v_1\|_{L^2(I)}+\tau^{1/2}\|v_0\|_{L^2(I)})\Big)
\nonumber\\ & \leq  \frac{1}{|\psi(\ell)|}\Big(\|f''\|_{L^2(0,\tau)}|
+\|\psi\|_{L^2(I)}(\tau M+\tau^{1/2}\|v_1\|_{L^2(I)})\Big)
 +|\widehat{G}[u_{xx}](0)|\|k'\|_{L^2(0,\tau)}
 \nonumber\\ & 
\quad +\frac{1}{|\psi(\ell)|}(|k(0)|+\tau^{1/2}M) \Big(\|f'\|_{L^2(0,\tau)}
 +\|\psi\|_{L^2(I)}(\tau^2 M+\tau^{3/2}\|v_1\|_{L^2(I)}+\tau^{1/2}\|v_0\|_{L^2(I)})\Big).
\end{align}
Then we will use \eqref{eqn-4.8} for $\|k'\|_{L^2(0,\tau)}$ in \eqref{eqn-4.9}. For simplicity, hereafter $C$ represents a generic constant that depends only on $I.$

Using the energy estimate for the linear problem \eqref{eqn-4.2} (see Appendix \ref{linear-EE}), we have
\begin{align*}
    \|v\|_{H^2(0,\tau;H_0^1(I)\cap H^2(I))}\leq C(\|v_0\|_{H_0^1(I)\cap H^2(I)}+\|v_1\|_{H_0^1(I)\cap H^2(I))}+\|K\|_{L^2(0,\tau;L^2(I))}),
\end{align*}
where
\begin{align*}
    K:=- \tilde k(t)u_0''(x)-\int_0^t \tilde k(t-s)\tilde v_{xx}(x,s)\,ds+z''\frac x\ell.
\end{align*}
Next, we estimate $K$ as follows
\begin{align*}
    \|K\|_{L^2(0,\tau;L^2(I))}&\leq \|\tilde k\|_{L^2(0,\tau)}\|u''_0\|_{L^2(I)}
+\tau^{1/2}\|\tilde k\|_{L^2(0,\tau)}\|\tilde v_{xx}\|_{L^2(0,\tau;L^2(I))}+\|z''\|_{L^2(0,\tau)}
\\
& \leq  C\Big((\|u_0\|_{H^2(I)}+\tau^{1/2}\|\tilde v\|_{L^2(0,\tau;H^2(I))})\|\tilde k\|_{L^2(0,\tau)}+\|z''\|_{L^2(0,\tau)}\Big).
\end{align*}
Thus, making use of definition of the space $Q(\tau, M)$ and using $y''(t)=y''(0)+\int_0^ty'''(s)ds$, we obtain the following estimate
\begin{align*}
    \|K\|_{L^2(0,\tau;L^2(I))} & \leq C\Bigg\{\left(\|u_0\|_{H^2(I)}+\tau^{1/2}M\right)\left\|k(0)+\int_0^t\tilde k'(s)ds\right\|_{L^2(0,\tau)}
+p\|y'''\|_{L^2(0,\tau)}
\\
 & \qquad \quad +q\left\|y''(0)+\int_0^ty'''(s)ds\right\|_{L^2(0,\tau)}\Bigg\}
 \\ & \leq  C\Bigg\{\left(\|u_0\|_{H^2(I)}+\tau^{1/2}M\right)\left(\tau^{1/2}|k(0)|+\tau \|\tilde k'\|_{L^2(0,\tau)}\right)
\\
& \qquad \quad +p\|y'''\|_{L^2(0,\tau)}+q\tau^{1/2}|y''(0)|+q\tau \|y'''\|_{L^2(0,\tau)}\Bigg\}.
\end{align*}
Using \eqref{eqn-4.8} and \eqref{eqn-4.9}, we have
\begin{align}\label{eqn-4.10}
    & \|K\|_{L^2(0,\tau;L^2(I))} 
    \nonumber\\ & \leq  C\bigg[\left(\|u_0\|_{H^2(I)}+\tau^{1/2}M\right)\left(\tau^{1/2}|k(0)|+\tau M\right)
  +q\tau^{1/2}|y''(0)| +(p+\tau q)\|y'''\|_{L^2(0,\tau)} \bigg].
\end{align}
If we take $\tau>0$ small enough such that
\begin{align}\label{eqn-4.11}
    \tau^2(1+M)\leq 1,
\end{align}
estimates \eqref{eqn-4.8}-\eqref{eqn-4.10}, transform to
\begin{align*}
  &  \|v\|_{H^2(0,\tau;H_0^1(I)\cap H^2(I))}+\| k'\|_{L^2(0,\tau)}
    \\ & \leq
C\Biggr\{\|v_0\|_{H_0^1(I)\cap H^2(I)}+\|v_1\|_{H_0^1(I)\cap H^2(I)}+\left(\|u_0\|_{H^2(I)}+\tau^{1/2}M\right)\left(\tau^{1/2}|k(0)|+\tau M\right)
\\ & \qquad \quad
+q\tau^{1/2}|y''(0)| +(p+\tau q)\|y'''\|_{L^2(0,\tau)}+\| k'\|_{L^2(0,\tau)}
\Biggr\}
\\ & 
\leq C\Biggr\{\|v_0\|_{H_0^1(I)\cap H^2(I)}+\|v_1\|_{H_0^1(I)\cap H^2(I)}+\left(\|u_0\|_{H^2(I)}+1\right)\left(|k(0)|+1\right)
\\ & \qquad \quad
+q|y''(0)| +(p+q)\Bigg(\frac{1}{|\psi(\ell)|}\Big(\|f''\|_{L^2(0,\tau)}|
+\|\psi\|_{L^2(I)}(1+\|v_1\|_{L^2(I)})\Big)
\\ & \qquad \quad
+\frac{1}{|\psi(\ell)|}(|k(0)|+1) \Big(\|f'\|_{L^2(0,\tau)}
+\|\psi\|_{L^2(I)}(1+\|v_1\|_{L^2(I)}+\|v_0\|_{L^2(I)})\Big)
\\ & \qquad \quad +|\widehat{G}[u_{xx}(0)]| F(\tau)\Bigg)+F(\tau)\Biggr\}=:M_0.
\end{align*}
Here
\begin{align*}
    & F(\tau)
    \nonumber\\ & =\alpha \Biggr[\|f^{(iv)}\|_{L^2(0,\tau)}
+\|\varphi'''\|_{L^2(I)}\Big[1+\|v_1\|_{L^2(I)}+(1+\|v_1\|_{L^2(I)}+\|v_0\|_{L^2(I)})(|k(0)|+1)\Big]\Biggr].
\end{align*}
We notice that $M_0=M_0(\tau)$ is non-decreasing. Therefore, we can define the  mapping $A:Q(\tau, M)\rightarrow  Q(\tau, M)$  by choosing $M\geq M_0$ and $\tau$ small enough as in \eqref{eqn-4.11}.
\vskip 2mm
\noindent
{\bf Step 2.} In this step, we prove that $A$ defined by \eqref{eqn-4.1} is a contraction mapping.

Let $(\tilde v_j,\tilde k'_j)\in Q(\tau,M)$ for $j=1,2$; define $k'_j$ and $v_j$ by \eqref{eqn-4.1}-\eqref{eqn-4.3} with $(\tilde v,\tilde k')=(\tilde v_j,\tilde k'_j)$ respectively. Then $(\tilde v_1,\tilde k'_j)$ for $j=1,2$ satisfies
\begin{equation}\label{eqn-4.12}
    \left\{
    \begin{aligned}
        & (v_1-v_2)_{tt}-(v_1-v_2)_{xx}-\beta (v_1-v_2)_{xxtt}+(\tilde k_1-\tilde k_2)u''_0(x)
\\ &
+\int_0^t (\tilde k_1-\tilde k_2)(t-s) (\tilde v_2)_{xx}(x,s)\,ds+\int_0^t \tilde k_1(t-s)(\tilde v_1-\tilde v_2)_{xx}(x,s)\,ds
\\ & 
=(z''_1-z''_2)\frac x\ell,\quad \ && \hspace{-20mm}  (x,t)\in I\times (0,T),
\\ & 
v_1-v_2\big|_{x=0}=v_1-v_2\big|_{x=\ell}=0, \quad && \hspace{-20mm} 0<t<T,
\\ & 
v_1-v_2\big|_{t=0}=0, \ (v_1-v_2)_t\big|_{t=0}=0, \quad  && \hspace{-20mm} x\in I,
    \end{aligned}
    \right.
\end{equation}
with
\begin{align}\label{eqn-4.13}
    & (k'_1-k'_2)(t)
    \nonumber\\ & =-\alpha\Bigg\{\int_0^\ell  (\tilde v_1-\tilde v_2)_t\varphi'''(x)dx
+k(0)\int_0^\ell (\tilde v_1-\tilde v_2)\varphi'''(x)dx
\nonumber \\ & \quad 
+\int_0^t\int_0^\ell  (\tilde k_1-\tilde k_2)'(t-s) \tilde v_2(x,s)\varphi'''dxds+\int_0^t\int_0^\ell  \tilde k_1'(t-s) (\tilde v_1-\tilde v_2)(x,s)\varphi'''dxds\Bigg\},
\end{align}
and
\begin{align}\label{eqn-4.14}
    (y'''_1-y'''_2)(t)& =\frac{1}{\psi(\ell)}\int_0^\ell \psi(x)(\tilde v_1-\tilde v_2)_{txx}dx-(k_1'-k_2')(t)\widehat{G}[u_{xx}](0)
\nonumber \\ & \quad
-\frac{k(0)}{\psi(\ell)}\int_0^\ell \psi(x)(\tilde v_1-\tilde v_2)_{xx}dx
-\int_0^t(\tilde k_1'-\tilde k_2')(t-s)G[(\tilde v_2)_{xx}](s)\,ds
\nonumber \\ & \quad
-\frac{1}{\psi(\ell)}\int_0^t\tilde k_1'(t-s)\int_0^\ell \psi(x)(\tilde v_1-\tilde v_2)_{xx}(x,s)\,dxds, \quad  0< t<T.
\end{align}
Now we estimate $k'_1-k'_2$ from the equation \eqref{eqn-4.13} as follows:
\begin{align*}
     \|k_1'-k'_2\|_{L^2(0,\tau)}
      & \leq \alpha \Bigg\{\|(\tilde v_1-\tilde v_2)_t\|_{L^2(0,\tau;L^2(I))}\|\varphi'''\|_{L^2(I)}
+|k(0)|\|\tilde v_1-\tilde v_2\|_{L^2(0,\tau;L^2(I))}\|\varphi'''\|_{L^2(I)}
\nonumber\\ & \quad \qquad +\left\|\int_0^t\int_0^\ell (\tilde k_1'-\tilde k'_2)(t-s) \tilde v_2(x,s)\varphi'''(x)\,dxds\right\|_{L^2(0,\tau)}
\\ & \quad \qquad 
+\left\|\int_0^t\int_0^\ell \tilde k'_1(t-s)(\tilde v_1-\tilde v_2)(x,s)\varphi'''(x)\,dxds\right\|_{L^2(0,\tau)}\Bigg\}.
\end{align*}
We use the same argument as in the calculation of \eqref{eqn-4.8} to obtain 
\begin{align}\label{eqn-4.15}
   & \|k_1'-k'_2\|_{L^2(0,\tau)}
   \nonumber\\ &  \leq \alpha \Bigg\{\tau\|(\tilde v_1-\tilde v_2)_{tt}\|_{L^2(0,\tau;L^2(I))}\|\varphi'''\|_{L^2(I)}
+\tau^2|k(0)|\|(\tilde v_1-\tilde v_2)_{tt}\|_{L^2(0,\tau;L^2(I))}\|\varphi'''\|_{L^2(I)}
\nonumber \\ & \qquad 
+\tau^{1/2}\|\tilde k_1'-\tilde k'_2\|_{L^2(0,\tau)}\|\varphi'''\|_{L^2(I)}\|\tilde v_2\|_{L^2(0,\tau;L^2(I))}
\nonumber \\ & \qquad 
+\tau^{1/2}\|\tilde k_1'\|_{L^2(0,\tau)}\|\varphi'''\|_{L^2(I)}\|\tilde v_1-\tilde v_2\|_{L^2(0,\tau;L^2(I))}\Bigg\}
\nonumber \\ &
\leq \alpha\tau^{1/2} \Bigg\{\tau^{1/2}\|(\tilde v_1-\tilde v_2)_{tt}\|_{L^2(0,\tau;L^2(I))}\|\varphi'''\|_{L^2(I)}
+\tau^2|k(0)|\|(\tilde v_1-\tilde v_2)_{tt}\|_{L^2(0,\tau;L^2(I))}\|\varphi'''\|_{L^2(I)}
\nonumber \\ & \qquad +\tau^{1/2}\|\tilde k_1'-\tilde k'_2\|_{L^2(0,\tau)}\|\varphi'''\|_{L^2(I)}\|\tilde v_2\|_{L^2(0,\tau;L^2(I))}
\nonumber \\ &
 \qquad +\tau^{1/2}\|\tilde k_1'\|_{L^2(0,\tau)}\|\varphi'''\|_{L^2(I)}\|\tilde v_1-\tilde v_2\|_{L^2(0,\tau;L^2(I))}\Bigg\}
\nonumber \\ &
\leq \alpha\tau^{1/2}\|\varphi'''\|_{L^2(I)} \left(\tau^{1/2}+\tau^{3/2}|k(0)|+\tau^{2}M\right)\|\tilde v_1-\tilde v_2\|_{H^2(0,\tau;H_0^1(I)\cap H^2(I))}
\nonumber \\ & \quad
+\alpha\tau^{1/2}\|\varphi'''\|_{L^2(I)}M\|\tilde k_1'-\tilde k'_2\|_{L^2(0,\tau)}.
\end{align}
The term \eqref{eqn-4.14} can be estimated as 
\begin{align}\label{eqn-4.16}
  &  \|y'''_1-y'''_2\|_{L^2(0,\tau)}
  \nonumber\\ & \leq \frac{1}{|\psi(\ell)|}\|\psi\|_{L^2(I)}\tau\|(\tilde v_1-\tilde v_2)_{tt}\|_{L^2(0,\tau;L^2(I))}+|\widehat{G}[u_{xx}](0)|\|k_1'-k'_2\|_{L^2(0,\tau)}
  \nonumber \\ & \quad
 +\frac{|k(0)|}{|\psi(\ell)|}\|\psi\|_{L^2(I)}\tau^2\|(\tilde v_1-\tilde v_2)_{tt}\|_{L^2(0,\tau;L^2(I))}
+\tau^{1/2}\|\tilde k_1'-\tilde k'_2\|_{L^2(0,\tau)}\|G[(\tilde v_2)_{xx}]\|_{L^2(0,\tau;L^2(I))}
\nonumber \\ & \quad  +
\frac{\tau^{1/2}}{|\psi(\ell)|}\|\psi\|_{L^2(I)}\|\tilde k_1'\|_{L^2(0,\tau)}\|(\tilde v_1-\tilde v_2)_{xx}\|_{L^2(0,\tau;L^2(I))}
\nonumber \\ & 
\leq \frac{\tau^{1/2}}{|\psi(\ell)|}\|\psi\|_{L^2(I)}\left(\tau^{1/2}+|k(0)|\tau^{3/2}+M\right)\|\tilde v_1-\tilde v_2\|_{H^2(0,\tau;H_0^1(I)\cap H^2(I))}
\nonumber \\ & \quad +\frac{\tau^{1/2}}{|\psi(\ell)|}\left(\|f'\|_{L^2(0,\tau)}+M\|\psi\|_{L^2(I)}\right)\|\tilde k_1'-\tilde k'_2\|_{L^2(0,\tau)} +|\widehat{G}[u_{xx}](0)|\|k_1'-k'_2\|_{L^2(0,\tau)}.
\end{align}
Using the estimates for the linear problem \eqref{eqn-4.12} (see Appendix \ref{linear-EE}), we estimate
\begin{align*}
    \|v_1-v_2\|_{H^2(0,\tau;H_0^1(I)\cap H^2(I))}\leq C\|K_1\|_{L^2(0,\tau;L^2(I))},
\end{align*}
where
\begin{align*}
    K_1& :=-(\tilde k_1-\tilde k_2)u''_0(x)-\int_0^t (\tilde k_1-\tilde k_2)(t-s) (\tilde v_2)_{xx}(x,s)\,ds
\nonumber \\ & \quad -\int_0^t \tilde k_1(t-s)(\tilde v_1-\tilde v_2)_{xx}(x,s)\,ds+(z''_1-z''_2)\frac x\ell.
\end{align*}
For $K_1$, we obtain
\begin{align}\label{eqn-4.17}
    \|K_1\|_{L^2(0,\tau;L^2(I))} & \leq \|\tilde k_1-\tilde k_2\|_{L^2(0,\tau)}\|u_0\|_{H^2(I)}
+\tau^{1/2}\|\tilde k_1-\tilde k_2\|_{L^2(0,\tau)}\|(\tilde v_2)_{xx}\|_{L^2(0,\tau;L^2(I))}
\nonumber\\ & \quad +\tau^{1/2}\|\tilde k_1\|_{L^2(0,\tau)}\|(\tilde v_1-\tilde v_2)_{xx}\|_{L^2(0,\tau;L^2(I))}
+\|z''_1-z_2''\|_{L^2(0,\tau)}
\nonumber\\ & 
\leq C\Big(\tau\|\tilde k'_1-\tilde k'_2\|_{L^2(0,\tau)}\|u_0\|_{H^2(I)}
+\tau^{3/2}\|\tilde k'_1-\tilde k'_2\|_{L^2(0,\tau)}\|\tilde v_2\|_{L^2(0,\tau;H^2(I))}
\nonumber\\ & \quad +\tau^{3/2}\|\tilde k'_1\|_{L^2(0,\tau)}\|\tilde v_1-\tilde v_2\|_{L^2(0,\tau;H_0^1(I)\cap H^2(I))}
+\|z''_1-z_2''\|_{L^2(0,\tau)}\Big)
\nonumber\\ & \leq C\Big(\tau \left(\|u_0\|_{H^2(I)}+\tau^{1/2}M\right)\|\tilde k_1'-\tilde k_2'\|_{L^2(0,\tau)}
\nonumber\\ & \quad +\tau^{3/2}M\|\tilde v_1-\tilde v_2\|_{H^2(0,\tau;H_0^1(I)\cap H^2(I))}+(p+q\tau)\|y_1'''-y'''_2\|_{L^2(0,\tau)}\Big).
\end{align}
Thus, combining \eqref{eqn-4.15}-\eqref{eqn-4.17}, we arrive at
\begin{align*}
& \|v_1-v_2\|_{H^2(0,\tau;H_0^1(I)\cap H^2(I))}+\|k_1'-k'_2\|_{L^2(0,\tau)}
\nonumber \\ &
\leq C\Big(\tau \left(\|u_0\|_{H^2(I)}+\tau^{1/2}M\right)\|\tilde k_1'-\tilde k_2'\|_{L^2(0,\tau)}
\nonumber\\ & \quad +\tau^{3/2}M\|\tilde v_1-\tilde v_2)\|_{H^2(0,\tau;H_0^1(I)\cap H^2(I))}+(p+q\tau)\|y_1'''-y'''_2\|_{L^2(0,\tau)}\Big)+\|k_1'-k'_2\|_{L^2(0,\tau)}
\nonumber\\ & \leq  C\Big(\tau \left(\|u_0\|_{H^2(I)}+\tau^{1/2}M\right)\|\tilde k_1'-\tilde k_2'\|_{L^2(0,\tau)}
+\tau^{3/2}M\|\tilde v_1-\tilde v_2\|_{H^2(0,\tau;H_0^1(I)\cap H^2(I))}
\nonumber\\ & \quad +(p+q\tau)\Big\{
\frac{\tau^{1/2}}{|\psi(\ell)|}\|\psi\|_{L^2(I)}\left(\tau^{1/2}+|k(0)|\tau^{3/2}+M\right)\|\tilde v_1-\tilde v_2\|_{H^2(0,\tau;H_0^1(I)\cap H^2(I))}
\nonumber\\ & \quad +\frac{\tau^{1/2}}{|\psi(\ell)|}\left(\|f'\|_{L^2(0,\tau)}+M\|\psi\|_{L^2(I)}\right)\|\tilde k_1'-\tilde k'_2\|_{L^2(0,\tau)}\Big\} \Big)
\nonumber\\ & \quad +\left(1+C(p+q\tau)|\widehat{G}[u_{xx}](0)\right)|\|k_1'-k'_2\|_{L^2(0,\tau)}.
\end{align*}
Using \eqref{eqn-4.15}, finally we arrive at
\begin{align*}
& \|v_1-v_2\|_{H^2(0,\tau;H_0^1(I)\cap H^2(I))}+\|k_1'-k'_2\|_{L^2(0,\tau)}
\nonumber\\ & \leq \tau^{1/2}\Biggl\{\tau^{1/2}C\left(\|u_0\|_{H^2(I)}+\tau^{1/2}M\right)+\frac{p+q\tau}{\psi(\ell)}
\left(\|f'\|_{L^2(0,\tau)}+M\|\psi\|_{L^2(I)}\right)
\nonumber\\ & \quad 
+\alpha M\left(1+C(p+q\tau)|\widehat{G}[u_{xx}](0)\right)\Biggl\}\|\tilde k_1'-\tilde k'_2\|_{L^2(0,\tau)}
\nonumber\\ & \quad
+\tau^{1/2}\Biggl\{C\tau M+C\frac{p+q\tau}{\psi(\ell)}\|\psi\|_{L^2(I)}\left(\tau^{1/2}+|k(0)|\tau^{3/2}+M\right)
\nonumber\\ & \quad
+\alpha \|\varphi'''\|_{L^2(I)}\left(1+C(p+q\tau)|\widehat{G}[u_{xx}](0)\right)\left(\tau^{1/2}+|k(0)|\tau^{3/2}+\tau^2M\right)\Biggr\}
\nonumber\\ & \quad 
\times\|\tilde v_1-\tilde v_2\|_{H^2(0,\tau;H_0^1(I)\cap H^2(I))}.    
\end{align*}
If we take $\tau>0$ small enough, then we have $A: Q(\tau, M) \rightarrow  Q(\tau, M)$ is a \emph{contraction mapping}. Thus, from the \emph{contraction mapping theorem}, one can conclude that for a small time $\tau,$ there exists a unique solution $(v,k')\in H^2(0,\tau;H_0^1(I)\cap H^2(I))\times L^2(0,\tau)$ to the equivalent  system \eqref{eqn-3.2}--\eqref{eqn-3.6} in $[0,\tau]$. The proof is completed.
\end{proof}

Next, we prove the global uniqueness result for our inverse problem. Let $0<\tau <  \tau'\leq (2\tau)\land T$ and let $(\hat v, \hat k)$ be the solution of the system \eqref{eqn-3.2}--\eqref{eqn-3.6} obtained in $[0,\tau]$ from Theorem \ref{thm-4.1} for small $\tau>0.$ We will prove that $(\hat v, \hat k)$ is extensible to a solution $(v, k)\in H^2(0,\tau';H_0^1(I)\cap H^2(I))\times H^1(0,\tau')$ in $[0,\tau'].$ The following lemma is very useful in the proof of the Theorem \ref{thm-4.3} which gives the global existence of the solution of the system \eqref{eqn-3.2}--\eqref{eqn-3.6}.

\begin{lemma}\label{lem-4.1}
Let the assumptions $(i1)$-$(i5)$ hold. Let $0<\tau <  \tau'\leq (2\tau)\land T$ and  let $(u,k)$ (or $(v,k)$) be a solution of the system \eqref{eqn-3.1}--\eqref{eqn-3.6} in $(0,\tau')$ with the regularity
$$
v\in H^2(0,\tau';H_0^1(I)\cap H^2(I)), \quad k\in  H^1(0,\tau').
$$
We set
\begin{itemize}
    \item  [(H1)] $ v:=u_t+z\frac{x}{\ell},$
\item  [(H2)] $ \hat v:=v|_{[0,\tau]},$
\item  [(H3)] $  v_{\tau}:=v(x,\tau+t),\quad t\in [0,\tau'-\tau],$
\item  [(H4)] $ \hat k:=k|_{[0,\tau]},$
\item  [(H5)] $  k_{\tau}:=k(\tau+t),\quad t\in [0,\tau'-\tau],$
\item  [(H6)] $ g(x,t):=\int_t^{\tau}\hat k(\tau+t-s)\hat v_{xx}ds,\quad t\in [0,\tau'-\tau],$
\item  [(H7)] $ u_{\tau}(x):=v(x,\tau), \quad u_{\tau \tau}(x):=v_t(x,\tau),$
\item [(H8)] $ f_{\tau}:=f(\tau+t),\quad t\in [0,\tau'-\tau],$
\item [(H9)] $ y_{\tau}:=y(\tau+t),\quad t\in [0,\tau'-\tau].$
\end{itemize}
\medskip

Then we have the following obvious conditions:
\begin{itemize}
    \item [(A1)]$\ v_{\tau}\in H^2(0,\tau'-\tau;H_0^1(I)\cap H^2(I)),$
\item [(A2)] $\ k_{\tau}\in H^1(0,\tau'-\tau),$
\item  [(A3)]  $\  u_{\tau},u_{\tau \tau}\in  H_0^1(I)\cap H^2(I),$

\item [(A4)] $\ f_{\tau} \in H^4(0,\tau'-\tau),$

\item [(A5)]  $\ g(x,t) \in H^1(0,\tau'-\tau; L^2(I)),$

\item[(A6)] $\ (v_{\tau},k_{\tau}) $ solves the system:
\begin{align}\label{eqn-4.18}
   & (v_{\tau})_{tt}-(v_{\tau})_{xx}-\beta (v_{\tau})_{xxtt}+k_{\tau}u''_0(x)+\int_0^t k_{\tau}(t-s)\hat v_{xx}
(x,s)\,ds
\nonumber\\ & +\int_0^t \hat k(t-s) (v_{\tau})_{xx}(x,s)\,ds
+g(x,t)=z_{\tau}''\frac x\ell,\ (x,t)\in I\times (0,\tau'-\tau),
\end{align}
\begin{align}\label{eqn-4.19}
    v\big|_{x=0}=v\big|_{x=\ell}=0, \quad 0<t<\tau'-\tau, 
\end{align}
\begin{align}\label{eqn-4.20}
    v_{\tau}\big|_{t=0}=u_{\tau}(x), \ (v_{\tau})_t\big|_{t=0}=u_{\tau \tau}(x), \quad  x\in I,
\end{align}
with
\begin{align}\label{eqn-4.21}
    k_{\tau}'(t)&=\alpha\Bigg\{f_{\tau}^{(iv)}(t)+\int_0^\ell (v_{\tau})_t(x,t)\varphi'''(x)dx
-k(0)\int_0^\ell  v_{\tau}(x,t)\varphi'''(x)dx
\nonumber\\ & \quad -\int_0^t\int_0^\ell k_{\tau}'(t-s) \hat v(x,s)\varphi'''(x)dxds
-\int_0^t\int_0^\ell \hat k'(t-s) (v_{\tau})(x,s)\varphi'''(x)dxds
 \nonumber\\ & \quad   -\int_t^{\tau}\int_0^\ell \hat k'(\tau+t-s)\hat v(x,s)\varphi'''(x)dxds\Bigg\},
\end{align}
where $z_{\tau}'':=py_{\tau}'''+qy_{\tau}''$,
and
\begin{align}\label{eqn-4.22}
    y_{\tau}'''(t)& :=G'[(v_{\tau})_{xx}](t)-k_{\tau}'(t)\widehat{G}[u_{xx}](0)
-\hat k(0)G[(v_{\tau})_{xx}](t)
\nonumber \\ & \quad 
-\int_0^t\hat k'(t-s)G[(v_{\tau})_{xx}](s)\,ds-\int_0^tk_{\tau}'(t-s)G[\hat v_{xx}](s)\,ds
\nonumber \\ & \quad
-\int_t^{\tau}\hat k'(\tau+t-s)G[\hat v_{xx}](s)\,ds, \quad  0< t<\tau'-\tau,
\end{align}
where $G[(v_{\tau})_{xx}](t)=\frac{1}{\psi(\ell)}\left(f'_{\tau}(t)+\int_0^\ell \psi(x)[(v_{\tau})_{xx}](x,t)dx\right).$

\item[(A7)] On the over hand, if $(\hat v,\hat k) \in H^2(0,\tau;H_0^1(I)\cap H^2(I)) \times H^1(0,\tau)$, solves the system \eqref{eqn-3.2}--\eqref{eqn-3.6} and  $(v_{\tau},k_{\tau})\in H^2(0,\tau'-\tau;H_0^1(I)\cap H^2(I))\times H^1(0,\tau'-\tau)$ solves the system \eqref{eqn-4.18}--\eqref{eqn-4.22}. Set for $t\in [0,\tau'],$
\begin{align}\label{eqn-4.23}
    (v(x,t),k(t))=\begin{cases}
(\hat v,\hat k),&\text{if $t \in [0,\tau]$,}\\
(v_{\tau},k_{\tau}),&\text{if $t \in [\tau,\tau']$.}
\end{cases}
\end{align}
\end{itemize}
Then $(v,k)  \in H^2(0,\tau';H_0^1(I)\cap H^2(I)) \times H^1(0,\tau')$ and solves the system \eqref{eqn-3.2}--\eqref{eqn-3.6} in $[0,\tau'].$
\end{lemma}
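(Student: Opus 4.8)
The lemma is essentially a gluing statement: it asserts that the shifted pair $(v_\tau,k_\tau)$ solves the ``remainder'' system \eqref{eqn-4.18}--\eqref{eqn-4.22} on $[0,\tau'-\tau]$, that these shifted data have the asserted regularity, and conversely that concatenating a solution on $[0,\tau]$ with a solution of the remainder system on $[\tau,\tau']$ gives a genuine solution on $[0,\tau']$. The plan is to treat the three bundles of assertions in turn: first the regularity claims (A1)--(A5), then the forward direction (A6), then the converse (A7).

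For (A1)--(A5) the argument is just a change of variables $t\mapsto \tau+t$. Translation is an isometry on $H^m$ spaces over intervals, so restriction and shift preserve membership in $H^2(0,\tau';H_0^1(I)\cap H^2(I))$, $H^1(0,\tau')$ and $H^4(0,T)$; this gives (A1), (A2), (A4) immediately. For (A3) I would invoke the Sobolev embedding $H^2(0,\tau';H_0^1(I)\cap H^2(I))\hookrightarrow C([0,\tau'];H_0^1(I)\cap H^2(I))$ (cf.\ Remark \ref{Rem-2.1}), so that the time slices $u_\tau=v(\cdot,\tau)$ and $u_{\tau\tau}=v_t(\cdot,\tau)$ make sense and lie in $H_0^1(I)\cap H^2(I)$. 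For (A5), $g(x,t)=\int_t^\tau\hat k(\tau+t-s)\hat v_{xx}(x,s)\,ds$ is, after the substitution $\sigma=\tau+t-s$, a convolution-type integral of $\hat k\in H^1(0,\tau)\hookrightarrow C[0,\tau]$ against $\hat v_{xx}\in H^2(0,\tau;L^2(I))$; differentiating in $t$ under the integral sign (the boundary term at $s=t$ cancels against the $\sigma=t$ contribution or is handled by the Leibniz rule) shows $g\in H^1(0,\tau'-\tau;L^2(I))$, with the $H^1$ norm controlled by Lemma \ref{lem-2.1} applied with $p=2$, $X=L^2(I)$.

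The heart of (A6) is to split the memory integrals in \eqref{eqn-3.2}, \eqref{eqn-3.5}, \eqref{eqn-3.6} at $s=\tau$. For the PDE: evaluating \eqref{eqn-3.2} at time $\tau+t$ and writing $\int_0^{\tau+t}k(\tau+t-s)v_{xx}(x,s)\,ds=\int_0^\tau(\cdots)\,ds+\int_\tau^{\tau+t}(\cdots)\,ds$, the first piece becomes precisely $g(x,t)$ after the change of variable $s\mapsto s$ within $[0,\tau]$ (using $v|_{[0,\tau]}=\hat v$, $k|_{[0,\tau]}=\hat k$), while the second, after $s\mapsto \tau+s$, splits again into the two convolution terms $\int_0^t k_\tau(t-s)\hat v_{xx}(x,s)\,ds$ and $\int_0^t\hat k(t-s)(v_\tau)_{xx}(x,s)\,ds$ according to whether $t-s$ lands in $[0,\tau]$ or not — wait, more simply, on $[\tau,\tau+t]$ one has $s=\tau+\sigma$ with $\sigma\in[0,t]$, and $\tau+t-s=t-\sigma$, so $k(\tau+t-s)=k(t-\sigma)$; since $t-\sigma\le t\le\tau'-\tau\le\tau$ one actually gets $k(t-\sigma)=\hat k(t-\sigma)$ only when $t-\sigma\le\tau$, which holds, giving the single term $\int_0^t\hat k(t-\sigma)(v_\tau)_{xx}(x,\sigma)\,d\sigma$; the term $\int_0^t k_\tau(t-s)\hat v_{xx}(x,s)\,ds$ in \eqref{eqn-4.18} comes instead from re-expressing $g$ or from the kernel bookkeeping near the endpoint. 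I would carry this out carefully once and record that the algebraic identity matches \eqref{eqn-4.18} term by term; the verification of \eqref{eqn-4.21} and \eqref{eqn-4.22} is the same splitting applied to \eqref{eqn-3.5} and \eqref{eqn-3.6}, and the initial conditions \eqref{eqn-4.20} and boundary conditions \eqref{eqn-4.19} are immediate from (H7), \eqref{eqn-3.3}. The converse (A7) is the reverse reading of exactly these identities: given that $(\hat v,\hat k)$ solves \eqref{eqn-3.2}--\eqref{eqn-3.6} on $[0,\tau]$ and $(v_\tau,k_\tau)$ solves \eqref{eqn-4.18}--\eqref{eqn-4.22} on $[0,\tau'-\tau]$, one checks that the glued pair \eqref{eqn-4.23} lies in $H^2(0,\tau';H_0^1(I)\cap H^2(I))\times H^1(0,\tau')$ — here the only nontrivial point is matching of the traces at $t=\tau$, which holds because (H7) forces $v_\tau(\cdot,0)=\hat v(\cdot,\tau)$ and $(v_\tau)_t(\cdot,0)=\hat v_t(\cdot,\tau)$, giving continuity of $v$ and $v_t$ across $\tau$ and hence $H^2$-in-time regularity, while $k_\tau(0)=k(0)$ via \eqref{eqn-4.21} evaluated at $t=0$ together with the compatibility data $(i4)$ gives continuity of $k$ and thus $H^1$ regularity — and then reversing the split shows the glued pair satisfies \eqref{eqn-3.2}--\eqref{eqn-3.6} on all of $[0,\tau']$.

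\textbf{Main obstacle.} The routine parts are the translation-invariance and embedding arguments; the genuinely delicate bookkeeping is the memory-term splitting in (A6), in particular keeping track of which occurrences of the kernel are $\hat k$ (determined, on $[0,\tau]$) versus $k_\tau$ (the new unknown) and making sure the ``overlap'' contribution is exactly the forcing term $g$ and the extra single-convolution terms $\int_0^t k_\tau(t-s)\hat v_{xx}\,ds$ and $\int_t^\tau \hat k'(\tau+t-s)\hat v\,ds$ appearing in \eqref{eqn-4.18} and \eqref{eqn-4.21}--\eqref{eqn-4.22}; I expect this to require the restriction $\tau'\le 2\tau$ in an essential way (so that $t-s\le\tau'-\tau\le\tau$ throughout, keeping all shifted kernel evaluations inside $[0,\tau]$ where $k=\hat k$ is already known). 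Once the identities are written out the rest is a matter of quoting the regularity lemmas.
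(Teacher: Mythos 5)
Your overall route is the same as the paper's (shift time by $\tau$, split the memory integrals, quote translation invariance and Sobolev embeddings for (A1)--(A5), reverse the identities for (A7)), but the one step that constitutes the actual content of the proof is carried out incorrectly and then left unresolved. Splitting only at $s=\tau$ leads you to assert that $\int_0^\tau k(\tau+t-s)v_{xx}(x,s)\,ds$ "becomes precisely $g(x,t)$"; it does not. For $s\in[0,t]$ the kernel argument $\tau+t-s$ lies in $[\tau,\tau+t]$, i.e.\ beyond the interval where $k=\hat k$ is known, so there $k(\tau+t-s)=k_\tau(t-s)$ is the shifted \emph{unknown} kernel; only the portion $s\in[t,\tau]$ yields $g$. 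Consequently the term $\int_0^t k_\tau(t-s)\hat v_{xx}\,ds$ in \eqref{eqn-4.18} does not come "from re-expressing $g$ or from the kernel bookkeeping near the endpoint" --- it is exactly the $s\in[0,t]$ piece that your two-way split lumped into $g$. The correct decomposition, which is the paper's identity \eqref{eqn-4.29}, is the three-way split
\begin{align*}
\int_0^{\tau+t}k(\tau+t-s)v_{xx}\,ds
=\int_0^{t}k_\tau(t-s)\hat v_{xx}\,ds+\int_t^{\tau}\hat k(\tau+t-s)\hat v_{xx}\,ds+\int_{0}^{t}\hat k(t-s)(v_\tau)_{xx}\,ds,
\end{align*}
where the last term comes from $s\mapsto\tau+s$ on $[\tau,\tau+t]$ and uses $t-s\le\tau'-\tau\le\tau$ (this is the only place the hypothesis $\tau'\le 2\tau$ enters, as you correctly anticipated). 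Substituting this into \eqref{eqn-3.2}, and into \eqref{eqn-3.7} and \eqref{eqn-2.6} followed by differentiation in $t$, gives \eqref{eqn-4.18} and \eqref{eqn-4.21}--\eqref{eqn-4.22}, including the tail terms $\int_t^\tau\hat k'(\tau+t-s)(\cdot)\,ds$.

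Since the paper itself dismisses (A1)--(A5) and (A7) as easily verified (your treatment of those points --- translation invariance, the embedding $H^2(0,\tau';X)\hookrightarrow C^1([0,\tau'];X)$ for the traces in (A3) and the matching at $t=\tau$ in (A7), Lemma \ref{lem-2.1} for (A5) --- is consistent with that), the splitting identity above is essentially the entire proof. Leaving it at "I would carry this out carefully once," with the terms currently mis-assigned, is therefore a genuine gap, though one repaired by a single correct computation rather than by any new idea.
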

\begin{proof}
 The assertions (A1)-(A5) and (A7) can be easily verified. Let us now prove (A6). First of all, from the system \eqref{eqn-3.2}--\eqref{eqn-3.6}, we have
\begin{align}\label{eqn-4.24}
  &  v_{tt}(x,\tau+t)-v_{xx}(x,\tau+t)-\beta  v_{xxtt}(x,\tau+t)+k(\tau+t)u''_0(x)
\nonumber \\
& +\int_0^{\tau+t} k(\tau+t-s)v_{xx}(x,s)\,ds=z''(\tau+t)\frac x\ell,\ (x,t)\in I\times (0,\tau'-\tau),
\end{align}
\begin{align}\label{eqn-4.25}
    v(x,\tau+t)\big|_{x=0}=v(x,\tau+t)\big|_{x=\ell}=0, \quad 0<t<\tau'-\tau, 
\end{align}
\begin{align}\label{eqn-4.26}
    v(x,\tau)=u_{\tau}(x), \ v_t(x,\tau)=u_{\tau \tau}(x), \quad  x\in I,
\end{align}
and from \eqref{eqn-3.7} and \eqref{eqn-2.6} for all $t\in [0,\tau'-\tau]$
\begin{align}\label{eqn-4.27}
    k(\tau+t) &=\alpha\Bigg\{f'''(\tau+t)+\int_0^\ell v(x,\tau+t)\varphi'''(x)dx
\nonumber \\ & \quad
-\int_0^{\tau+t}\int_0^\ell k(\tau+t-s) v(x,s)\varphi'''(x)dxds\Bigg\},
\end{align}
\begin{align}\label{eqn-4.28}
     y''(\tau+t)=G[v_{xx}](\tau+t)-k(\tau+t)\widehat{G}[u_{xx}](0)-\int_0^{\tau+t}k(\tau+t-s)G[v_{xx}](s)ds.
\end{align}
Next, as $\tau'-\tau\leq \tau,$ if $t\in [0,\tau'-\tau],$ we have 
\begin{align}\label{eqn-4.29}
  &  \int_0^{\tau+t}k(\tau+t-s)v_{xx}ds
\nonumber \\ &
= \int_0^{t}k(\tau+t-s)v_{xx}ds+\int_t^{\tau}k(\tau+t-s)v_{x x}ds+\int_{\tau}^{\tau+t}k(\tau+t-s)v_{xx}ds
\nonumber \\ & 
= \int_0^{t}k_{\tau}(t-s)\hat v_{xx}ds+\int_t^{\tau}\hat k(\tau+t-s)\hat v_{xx}ds+\int_{0}^{t}\hat k(t-s)(v_{\tau})_{xx}ds.
\end{align}
Substituting \eqref{eqn-4.29} in \eqref{eqn-4.24}--\eqref{eqn-4.26}, we immediately have \eqref{eqn-4.18}--\eqref{eqn-4.22}. Then, substituting \eqref{eqn-4.29} in \eqref{eqn-3.7} and \eqref{eqn-2.6} and differentiating we get \eqref{eqn-4.21}, \eqref{eqn-4.22}. The proof of the lemma is completed. 
\end{proof}

\begin{theorem}[Global-in-time uniqueness]\label{thm-4.2}
Let the assumptions $(i1)$-$(i5)$ hold. Then, if $\tau \in (0,T],$ and the inverse problem has two solutions
\begin{align*}
    v_j\in H^2(0,\tau;H_0^1(I)\cap H^2(I)),\  k_j\in H^1(0,\tau), y_j\in H^3(0,\tau) ,\ j\in{1,2},
\end{align*}
then $(v_1,k_1,y_1)=(v_2,k_2,y_2).$
\end{theorem}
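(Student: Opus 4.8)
The plan is to promote the local uniqueness already supplied by Theorem~\ref{thm-4.1} to a global one by a continuation argument resting on the shifting Lemma~\ref{lem-4.1}. Fix $\tau\in(0,T]$ and let $(v_1,k_1,y_1)$, $(v_2,k_2,y_2)$ be two solutions of the equivalent system \eqref{eqn-3.2}--\eqref{eqn-3.6} on $(0,\tau)$ with the stated regularity; put $M:=\max_{j=1,2}\big(\|v_j\|_{H^2(0,\tau;H_0^1(I)\cap H^2(I))}+\|k_j'\|_{L^2(0,\tau)}\big)<\infty$. I would then set $J:=\{t\in[0,\tau]:v_1\equiv v_2,\ k_1\equiv k_2,\ y_1\equiv y_2\ \text{on}\ [0,t]\}$. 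Since \eqref{eqn-3.2}--\eqref{eqn-3.6} involve only Volterra-type integrals, restrictions of solutions remain solutions, and $J$ is nonempty; it is closed (agreement a.e.\ on $[0,t_n]$ for $t_n\uparrow t^\ast$ gives agreement a.e.\ on $[0,t^\ast]$, and the traces $v_j(\cdot,t)$, $(v_j)_t(\cdot,t)$ are well defined thanks to the embeddings of Remark~\ref{Rem-2.1}). The theorem then amounts to showing that $t^\ast:=\sup J$ equals $\tau$.

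The key is the \emph{extension step}: if the two solutions coincide on $[0,\sigma]$ for some $\sigma\in[0,\tau)$, they coincide on $[0,\sigma+\eta]$ for some $\eta>0$. To see this I would restrict the solutions to $[0,\sigma']$ with $\sigma'\in(\sigma,(2\sigma)\wedge T\wedge\tau]$ (when $\sigma=0$, any $\sigma'\in(0,\tau]$), and invoke Lemma~\ref{lem-4.1} with shift parameter $\sigma$. The shifted pairs $((v_j)_\sigma,(k_j)_\sigma)$ then solve the system \eqref{eqn-4.18}--\eqref{eqn-4.22} on $(0,\sigma'-\sigma)$ — which reduces to \eqref{eqn-3.2}--\eqref{eqn-3.6} with $g\equiv0$ and no $[t,\sigma]$-history integrals when $\sigma=0$. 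Crucially, since $v_1\equiv v_2$, $k_1\equiv k_2$, $y_1\equiv y_2$ on $[0,\sigma]$, every piece of data entering \eqref{eqn-4.18}--\eqref{eqn-4.22} coincides for $j=1$ and $j=2$: the initial states $u_\sigma=v_j(\cdot,\sigma)$, $u_{\sigma\sigma}=(v_j)_t(\cdot,\sigma)$; the memory term $g$ of (H6); the shifted forcing carried by $f_\sigma$ and $y_\sigma$; and the integrals over $[t,\sigma]$ in \eqref{eqn-4.21}--\eqref{eqn-4.22}, which involve only $\hat v=v_j|_{[0,\sigma]}$ and $\hat k=k_j|_{[0,\sigma]}$. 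Hence \eqref{eqn-4.18}--\eqref{eqn-4.22} is literally the same problem for both shifted solutions, and it has the structure of Theorem~\ref{thm-4.1} up to the \emph{fixed} extra data $g$ and the fixed additional terms in the equations for $k_\sigma'$ and $y_\sigma'''$.

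Now I would run the difference estimates of the proof of Theorem~\ref{thm-4.1} (Step~2, inequalities \eqref{eqn-4.15}--\eqref{eqn-4.17}) for this shifted problem: with $W:=(v_1)_\sigma-(v_2)_\sigma$ and $\mu:=(k_1)_\sigma-(k_2)_\sigma$, the common terms $g$ and the $[t,\sigma]$-history integrals cancel, while the remaining new contributions are Volterra convolutions of $\mu'$ against the common $\hat v,\hat k$, controlled by Lemmas~\ref{lem-2.1}--\ref{lem-2.2} with the usual small factors $\eta^{1/2},\eta,\dots$ The resulting inequality reads $\|W\|_{H^2(0,\eta;H_0^1(I)\cap H^2(I))}+\|\mu'\|_{L^2(0,\eta)}\le\theta(\eta)\big(\|W\|_{H^2(0,\eta;H_0^1(I)\cap H^2(I))}+\|\mu'\|_{L^2(0,\eta)}\big)$ with $\theta(\eta)\to0$ as $\eta\to0$ (the estimate being valid on $Q(\eta,M)$, to which both shifted solutions belong). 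Choosing $\eta$ small enough that $\theta(\eta)<1$ forces $W\equiv0$, $\mu\equiv0$ on $[0,\eta]$, i.e.\ $v_1=v_2$ and $k_1=k_2$ on $[0,\sigma+\eta]$; then $y_1=y_2$ on $[0,\sigma+\eta]$ follows from \eqref{eqn-4.22} (equivalently \eqref{eqn-3.6}) together with the initial values of $y$, $y'$, $y''$ prescribed through $(i1)$--$(i5)$. This contradicts the maximality of $t^\ast$ whenever $t^\ast<\tau$, so $t^\ast=\tau$ and the two solutions agree on all of $[0,\tau]$. (Since $\eta$ can be chosen to depend only on $M$ and on global bounds on the data, one may equally start from the interval produced by Theorem~\ref{thm-4.1} and iterate the extension step finitely many times.)

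The main obstacle is exactly the verification invoked above — that Step~1 (well-posedness of the shifted linear problem, now with the extra forcing $g\in H^1(0,\sigma'-\sigma;L^2(I))$) and Step~2 (the difference/contraction estimates) of the proof of Theorem~\ref{thm-4.1} go through essentially verbatim once the additional known and \emph{identical} data $g$ and the $[t,\sigma]$-history terms are inserted. Everything else — closedness of $J$, keeping track of which terms cancel in the difference, and matching the initial data of $y$ — is routine bookkeeping.
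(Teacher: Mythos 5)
Your proposal is correct and follows essentially the same route as the paper: local uniqueness from Theorem \ref{thm-4.1} with $M$ dominating both solutions, a maximal coincidence time, the shift Lemma \ref{lem-4.1}, and a small-time difference estimate on the shifted interval that contradicts maximality. The only cosmetic difference is that you close the short-interval step by invoking the contraction estimates of Step 2 of Theorem \ref{thm-4.1} applied to the two fixed points, whereas the paper re-derives the corresponding bounds explicitly for the shifted difference system (inner products with $(v_1^*-v_2^*)_t$, $(v_1^*-v_2^*)_{tt}$, $-(v_1^*-v_2^*)_{xxt}$, $-(v_1^*-v_2^*)_{xxtt}$ plus Gronwall) together with the estimate \eqref{eqn-4.38} for $(k_1^*-k_2^*)'$.
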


\begin{proof} 
It suffices to show that $v_1=v_2$ and $k'_1=k_2'.$ We need to prove
\begin{align}\label{eqn-4.30}
  \|v_1-v_2\|_{H^2(0,\tau;H_0^1(I)\cap H^2(I))}+\|k'_1-k'_2\|_{L^2(0,\tau)}=0.
\end{align}
We have proved in Theorem \ref{thm-4.1} that for every $M\in \mathbb{R_+},$ there exists $\tau(M)\in (0,T]$, such that for all $t \in (0,\tau(M)],$ system \eqref{eqn-3.2}--\eqref{eqn-3.6} has a unique solution
\begin{align*}
    (\tilde v,\tilde k)\in H^2(0,\tau(M);H_0^1(I)\cap H^2(I))\times H^1(0,\tau(M)),
\end{align*}
such that
\begin{align*}
    \|\tilde v\|_{H^2(0,\tau(\ell);H_0^1(I)\cap H^2(I))}+\|\tilde k'\|_{L^2(0,\tau(M))}\leq M.
\end{align*}
This implies that, if $(v_j,k_j)\in H^2(0,\tau;H_0^1(I)\cap L^2(I)) \times H^1(0,\tau),\ j\in\{1,2\},$ are solutions of the system \eqref{eqn-3.2}--\eqref{eqn-3.6}, then there exists $\tau_1 \in (0,\tau]$, such that $(v_1,k_1)$ and $(v_2,k_2)$ coincide on $[0,\tau_1].$ In fact, we can set
\begin{align*}
    M_1:=\max\Big\{\|\tilde v_j\|_{H^2(0,\tau;H_0^1(I)\cap H^2(I))}+\|\tilde k_j'\|_{L^2(0,\tau)}:j\in \{1,2\}\Big\}
\end{align*}
and obtain that $(v_1,k_1)$ and $(v_2,k_2)$ coincide on $[0,\tau(M_1)]\wedge \tau ]$. Let us define
\begin{align}\label{eqn-4.31}
    \tau_1:=\inf\Big\{t\in (0,\tau]:\|v_1-v_2\|_{H^2(0,\tau;H_0^1(I)\cap H^2(I))}+\|k'_1-k'_2\|_{L^2(0,\tau)}>0\Big\}.
\end{align} 
If \eqref{eqn-4.30} is not true, then it is obvious that $\tau_1$ is well defined and we have $\tau_1\in (0,\tau).$ Set $t\in [0,\tau-\tau_1],\ j\in  \{1,2\},$
\begin{align*}
    v^*_j(t)=v_j(\tau_1+t),\quad k^*_j(t)=k_j(\tau_1+t), \quad y^*_j(t)=y_j(\tau_1+t).
\end{align*}
Keeping in mind that $v_1=v_2$ and $k_1=k_2$  if $t \in [0,\tau_1]$. From conditions (A1)-(A7) in Lemma \ref{lem-4.1}, we obtain
\begin{align}\label{eqn-4.32}
   & (v_1^*-v_2^*)_{tt}-(v_1^*-v_2^*)_{xx}-\beta (v_1^*-v_2^*)_{xxtt}+(k_1^*-k_2^*)u''_0(x)
\nonumber \\ & 
\quad+\int_0^t (k_1^*-k_2^*)(t-s)(v_1)_{xx}(x,s)\,ds+\int_0^t k_1(t-s)(v_1^*-v_2^*)_{xx}(x,s)\,ds
\nonumber \\ & 
=(z^*_1-z_2^*)''(t)\frac x\ell,\ (x,t)\in I\times [0,\tau_1\wedge (\tau-\tau_1)],
\end{align}
\begin{align}\label{eqn-4.33}
    (v_1^*-v_2^*)\big|_{x=0}=(v_1^*-v_2^*)\big|_{x=\ell}=0, \quad 0\leq t\leq \tau_1\wedge (\tau-\tau_1),
\end{align}
\begin{align}\label{eqn-4.34}
    (v_1^*-v_2^*)\big|_{t=0}=0, \ (v_1^*-v_2^*)_t\big|_{t=0}=0, \quad  x\in I, 
\end{align}
\begin{align}\label{eqn-4.35}
   & (k_1^*-k_2^*)'(t)
   \nonumber \\ & =\alpha\Bigg\{\int_0^\ell (v_1^*-v_2^*)_t(x,t)\varphi'''(x)dx
- k(0)\int_0^\ell  (v_1^*-v_2^*)(x,t)\varphi'''(x)dx
\nonumber \\ &\quad  -\int_0^t\int_0^\ell (k_1^*-k_2^*)'(t-s) v_1(x,s)\varphi'''(x)dxds 
-\int_0^t\int_0^\ell k_1'(t-s) (v_1^*-v_2^*)(x,s)\varphi'''(x)dxds\Bigg\},
\end{align}
where $(z^*_1-z^*_2)'':=p(y^*_1-y^*_2)'''+q(y^*_1-y^*_2)''$,
\begin{align}\label{eqn-4.36}
    (y^*_1-y^*_2)'''(t)&:=G'[(v_1^*-v_2^*)_{xx}](t)-(k_1^*-k_2^*)'(t)\widehat{G}[u_{xx}](0)
-k(0)G[(v_1^*-v_2^*)_{xx}](t)
\nonumber \\ & \quad
-\int_0^t k_1'(t-s)G[(v_1^*-v_2^*)_{xx}](s)\,ds-\int_0^t(k_1^*-k_2^*)'(t-s)G[(v_1)_{xx}](s)\,ds
\end{align}
and
\begin{align}\label{eqn-4.37}
    G[(v_1^*-v_2^*)_{xx}](t)=\frac{1}{\psi(\ell)}\int_0^\ell \psi(x)(v_1^*-v_2^*)_{xx}(x,t)dx=\frac{1}{\psi(\ell)}\int_0^\ell \psi''(x)(v_1^*-v_2^*)(x,t)dx.
\end{align}

Assume that $\delta \in (0,\tau_1\wedge (\tau-\tau_1)].$ Applying similar arguments as in the proof of  Theorem \ref{thm-4.1}, we estimate $(k^*_1-k^*_2)' $ and $(z^*_1-z^*_2)''$ from \eqref{eqn-4.35}, \eqref{eqn-4.36}, using \eqref{eqn-4.37} as
\begin{align*}
    \|(k^*_1-k^*_2)'\|_{L^2(0,\delta)} & \leq \alpha\Big\{\|(v_1^*-v_2^*)_t\|_{L^2(0,\delta;L^2(I))}\|\varphi'''\|_{L^2(I)}
\nonumber \\ & \quad
+|k(0)|\|(v_1^*-v_2^*)\|_{L^2(0,\delta;L^2(I))}\|\varphi'''\|_{L^2(I)}
\nonumber \\ & \quad
+\delta^{1/2}\|(k_1^*-k_2^*)'\|_{L^2(0,\delta)}\|\varphi'''\|_{L^2(I)}\|v_1\|_{L^2(0,\delta;L^2(I))}
\nonumber \\ & \quad
+\delta^{1/2}\|(v_1^*-v_2^*)\|_{L^2(0,\delta;L^2(I))}\|\varphi'''\|_{L^2(I)}\|k'_1\|_{L^2(0,\delta;L^2(I))}\Big\}
\nonumber \\ &  
\leq C(\delta)\|(v_1^*-v_2^*)_t\|_{L^2(0,\delta;L^2(I))}+C\delta^{1/2}\|(k_1^*-k_2^*)'\|_{L^2(0,\delta)}.
\end{align*}
Choosing $\delta>0$ such that $C\delta^{1/2}\leq \frac 12$, we find
\begin{align}\label{eqn-4.38}
    \|(k_1^*-k_2^*)'\|_{L^2(0,\delta)}\leq C(\delta)\|(v_1^*-v_2^*)_t\|_{L^2(0,\delta;L^2(I))}.
\end{align}
Next, we estimate 
\begin{align*}
   & \|(z^*_1-z^*_2)''\|_{L^2(0,\delta)}
   \\ &\leq (p+q\delta)\|(y^*_1-y^*_2)'''\|_{L^2(0,\delta)}
    \\ & \leq (p+q\delta)\Bigl(\frac{1}{|\psi(\ell)|}\|\psi''\|_{L^2(I)}\|(v^*_1-v^*_2)_t\|_{L^2(0,\delta;L^2(I))}+|\widehat{G}[u_{xx}](0)|\|(k^*_1-k^*_2)'\|_{L^2(0,\delta)}
\\ & \quad
+\frac{|k(0)|}{|\psi(\ell)|}\|\psi''\|_{L^2(I)}\|(v^*_1-v^*_2)\|_{L^2(0,\delta;L^2(I))}
+\delta^{1/2}\|(k^*_1-k^*_2)'\|_{L^2(0,\delta)}\|\psi''\|_{L^2(I)}\|v_1\|_{L^2(0,\delta;L^2(I))}
\\ & \quad +
\frac{\delta^{1/2}}{|\psi(\ell)|}\|\psi''\|_{L^2(I)}\|k_1'\|_{L^2(0,\delta)}\|(v^*_1-v^*_2)\|_{L^2(0,\delta;L^2(I))}\Bigr). 
\end{align*}
Using \eqref{eqn-4.38} and Lemma \ref{lem-2.2}, we find
\begin{align}\label{eqn-4.39}
    \|(z^*_1-z^*_2)''\|_{L^2(0,\delta)}\leq C(\delta) \|(v_1^*-v_2^*)_t\|_{L^2(0,\delta;L^2(I))}.
\end{align}

Taking the inner product with $(v_1^*-v_2^*)_t(\cdot)$ to the equation \eqref{eqn-4.32} and then using integration by parts, we obtain 
\begin{align*}
& \frac 12\frac{d}{dt}\left(\|(v_1^*-v_2^*)_t\|^2_{L^2(I)}+\|(v_1^*-v_2^*)_x\|^2_{L^2(I)}+\beta\|(v_1^*-v_2^*)_{xt}\|^2_{L^2(I)}\right)
\\ & 
=((k_1^*-k_2^*)\ast (v_1)_x(t),(v_1^*-v_2^*)_{xt}(t))+((k_1^*-k_2^*) u'_0,(v_1^*-v_2^*)_{xt}(t))
\\ & \quad 
+((k_1\ast (v^*_1-v^*_2)_x(t),(v_1^*-v_2^*)_{xt}(t))-((z^*_1-z_2^*)''(t) \frac{x^2}{2\ell},(v_1^*-v_2^*)_{xt}(t) )
\\ & 
\leq \|(k_1^*-k_2^*)\ast (v_1)_x(t)\|_{L^2(I)}\|(v_1^*-v_2^*)_{xt}(t)\|_{L^2(I)}+|(k_1^*-k_2^*)(t)|\|u'_0\|_{L^2(I)}\|(v_1^*-v_2^*)_{xt}(t)\|_{L^2(I)}
\\ & \quad 
+\|k_1\ast (v_1^*-v^*_2)_x(t)\|_{L^2(I)}\|(v_1^*-v_2^*)_{xt}(t)\|_{L^2(I)}+\frac{\ell}{2}|(z^*_1-z_2^*)''(t)|\|(v_1^*-v_2^*)_{xt}(t)\|_{L^2(I)}
\\ & 
\leq \frac 32\|(v_1^*-v_2^*)_{xt}(t)\|^2_{L^2(I)}+\frac 12\|(k_1^*-k_2^*)\ast (v_1)_x(t)\|^2_{L^2(I)}+\frac 12|(k_1^*-k_2^*)(t)|^2\|u'_0\|^2_{L^2(I)}
\\ & \quad
+\frac 12 \|k_1\ast (v_1^*-v^*_2)_x(t)\|^2_{L^2(I)}+\frac{\ell}{4}|(z^*_1-z_2^*)''(t)|^2+\frac 12\|(v_1^*-v_2^*)_{xt}(t)\|^2_{L^2(I)},
\end{align*}
for a.e. $t\in [0,\delta]$.
Integrating the above inequality from $0$ to $t$, we deduce
\begin{align*}
&  \|(v_1^*-v_2^*)_t\|^2_{L^2(I)}+ \|(v_1^*-v_2^*)_x\|^2_{L^2(I)}+\beta\|(v_1^*-v_2^*)_{xt}\|^2_{L^2(I)}
\\ & 
\leq \|(v_1^*-v^*_2)_x(0)\|^2_{L^2(I)}+\|(v_1^*-v^*_2)_{xt}(0)\|^2_{L^2(I)}+4\int_0^t\|(v_1^*-v_2^*)_{xt}(s)\|^2_{L^2(I)}ds
\\ & \quad 
+t\|k_1^*-k_2^*\|^2_{L^2(0,t)}\int_0^t\|(v_1)_x(s)\|^2_{L^2(I)}ds+\|k_1^*-k_2^*\|^2_{L^2(0,t)}\|u'_0\|^2_{L^2(I)}
\\ & \quad 
+\|(k_1^*-k_2^*)\|^2_{L^2(0,t)}\|u'_0\|^2_{L^2(I)}+t\|k_1\|^2_{L^2(0,t)}\int_0^t\|(v_1^*-v^*_2)_x(s)\|^2_{L^2(I)}ds
+\frac {\ell}{2}\|(z_1^*-z_2^*)\|^2_{L^2(0,t)}
\\ & 
\leq \|(v_1^*-v^*_2)_x(0)\|^2_{L^2(I)}+\|(v_1^*-v^*_2)_{xt}(0)\|^2_{L^2(I)}+C(\delta)\|(v_1^*-v_2^*)_t\|^2_{L^2(0,t)}\|u'_0\|^2_{L^2(I)}
\\ & \quad +C(\delta)\|(v_1^*-v_2^*)_t\|^2_{L^2(0,t)}\int_0^t\|(v_1)_x(s)\|^2_{L^2(I)}ds
\\ & \quad +(4\vee \delta\|k_1\|^2_{L^2(0,t)})\int_0^t\left(\|(v_1^*-v_2^*)_{xt}(s)\|^2_{L^2(I)}+\|(v_1^*-v^*_2)_x(s)\|^2_{L^2(I)}\right)ds
\\ & \quad +\frac l2 \|(v_1^*-v_2^*)_t\|^2_{L^2(0,t;L^2(I))},
\end{align*}
for all $t\in [0,\delta]$, where we have used \eqref{eqn-4.38} and \eqref{eqn-4.39}. An  application of Gronwall's inequality gives
\begin{align*}
   & \sup\limits_{t\in [0,\delta]}\left(\|(v_1^*-v^*_2)_x(t)\|^2_{L^2(I)}+\beta \|(v_1^*-v^*_2)_{xt}(t)\|^2_{L^2(I)}\right)
    \\ & \leq C\left(\|(v_1^*-v^*_2)_x(0)\|_{L^2(I)},\|(v_1^*-v^*_2)_{xt}(0)\|_{L^2(I)}, \delta\right).
\end{align*}
Taking the inner product with $(v_1^*-v_2^*)_{tt}(\cdot)$ to the equation \eqref{eqn-4.32}, we have 
\begin{align*}
& \|(v_1^*-v_2^*)_{tt}(t)\|^2_{L^2(I)}+\beta \|(v_1^*-v_2^*)_{xtt}(t)\|^2_{L^2(I)}
\\ & =-((v_1^*-v_2^*)_{x}(t),(v_1^*-v_2^*)_{xtt})
\\ & \quad 
+((k_1^*-k_2^*)\ast (v_1)_x(t),(v_1^*-v_2^*)_{xtt}(t))-((k_1^*-k_2^*) u''_0,(v_1^*-v_2^*)_{tt}(t))
\\ & \quad
+((k_1\ast (v^*_1-v^*_2)_x(t),(v_1^*-v_2^*)_{xtt}(t))-((z^*_1-z_2^*)''(t) \frac{x^2}{2\ell},(v_1^*-v_2^*)_{xtt}(t) )
\\ & 
\leq \|(v_1^*-v_2^*)_{x}(t)\|_{L^2(I)}
\|(v_1^*-v_2^*)_{xtt}(t)\|_{L^2(I)}
\\ & \quad +\|(k_1^*-k_2^*)\ast (v_1)_x(t)\|_{L^2(I)}\|(v_1^*-v_2^*)_{xtt}(t)\|_{L^2(I)}
\nonumber\\ & \quad +|(k_1^*-k_2^*)(t)|\|u''_0\|_{L^2(I)}\|(v_1^*-v_2^*)_{tt}(t)\|_{L^2(I)}
\\ & \quad 
+\|k_1\ast (v_1^*-v^*_2)_x(t)\|_{L^2(I)}\|(v_1^*-v_2^*)_{xtt}(t)\|_{L^2(I)}+\frac{\ell}{2}|(z^*_1-z_2^*)''(t)|\|(v_1^*-v_2^*)_{xtt}(t)\|_{L^2(I)}
\\ & 
\leq \frac {\beta}{2}\|(v_1^*-v_2^*)_{xtt }(t)\|^2_{L^2(I)}+\frac{2}{\beta}\|(v_1^*-v_2^*)_{x}(t)\|^2_{L^2(I)}+\frac{2}{\beta}\|(k_1^*-k_2^*)\ast (v_1)_x(t)\|^2_{L^2(I)}
\\ & \quad +\frac 12|(k_1^*-k_2^*)(t)|^2\|u''_0\|^2_{L^2(I)}
+\frac 12 \|(v_1^*-v_2^*)_{tt}(t)\|^2_{L^2(I)}
\\ & \quad +\frac{2}{\beta} \|k_1\ast (v_1^*-v^*_2)_x(t)\|^2_{L^2(I)}+\frac{\ell^2}{2\beta}|(z^*_1-z_2^*)''(t)|^2,
\end{align*}
for a.e. $t\in [0,\delta]$. Integrating the above inequality from $0$ to $t$, we obtain 
\begin{align*}
& \int_0^t\|(v_1^*-v_2^*)_{tt}(s)\|^2_{L^2(I)}ds+\beta\int_0^t\|(v_1^*-v_2^*)_{xtt}(s)\|^2_{L^2(I)}ds
\\ & 
\leq \|(v_1^*-v_2^*)_{xt}(0)\|^2_{L^2(I)}+\frac{4}{\beta}t\|k_1^*-k_2^*\|^2_{L^2(0,t)}\int_0^t\|(v_1)_x(s)\|^2_{L^2(I)}ds+\|k_1^*-k_2^*\|^2_{L^2(0,t)}\|u''_0\|^2_{L^2(I)}
\\ & \quad
+\frac{4}{\beta}(1+t\|k_1\|^2_{L^2(0,t)})\int_0^t\|(v_1^*-v^*_2)_x(s)\|^2_{L^2(I)}ds
+\frac {\ell^2}{\beta}\|(z_1^*-z_2^*)''\|^2_{L^2(0,t)}
\end{align*}
for all $t\in [0,\delta].$ Thus it is immediate from \eqref{eqn-4.38} and \eqref{eqn-4.39} that 
$(v_1^*-v_2^*)_{tt}\in L^2(0,\delta;H^1(I)).$

Taking the inner product with $-(v_1^*-v_2^*)_{xxt}(\cdot)$ to the equation \eqref{eqn-4.32}, we find 
\begin{align*}
& \frac 12\frac{d}{dt}\left(\|(v_1^*-v_2^*)_{xt}(t)\|^2_{L^2(I)}+\|(v_1^*-v_2^*)_{xx}(t)\|^2_{L^2(I)}
+\beta\|(v_1^*-v_2^*)_{xxt}(t)\|^2_{L^2(I)}\right)
\\ & 
=-((k_1^*-k_2^*)\ast (v_1)_{xx}(t),(v_1^*-v_2^*)_{xxt}(t))-((k_1^*-k_2^*) u''_0,(v_1^*-v_2^*)_{xxt}(t))
\\ & \quad 
-((k_1\ast (v^*_1-v^*_2)_{xx}(t),(v_1^*-v_2^*)_{xxt}(t))+((z^*_1-z_2^*)''(t) \frac{x}{\ell},(v_1^*-v_2^*)_{xxt}(t) )
\\ & 
\leq \|(k_1^*-k_2^*)\ast (v_1)_{xx}(t)\|_{L^2(I)}\|(v_1^*-v_2^*)_{xxt}(t)\|_{L^2(I)}
\nonumber\\ & \quad +|(k_1^*-k_2^*)(t)|\|u''_0\|_{L^2(I)}\|(v_1^*-v_2^*)_{xxt}(t)\|_{L^2(I)}
\\ & \quad
+\|k_1\ast (v_1^*-v^*_2)_{xx}(t)\|_{L^2(I)}\|(v_1^*-v_2^*)_{xxt}(t)\|_{L^2(I)}+|(z^*_1-z_2^*)''(t)|\|(v_1^*-v_2^*)_{xxt}(t)\|_{L^2(I)}
\\ & 
\leq 2\|(v_1^*-v_2^*)_{xxt }(t)\|^2_{L^2(I)}+\frac 12\|(k_1^*-k_2^*)\ast (v_1)_{xx}(t)\|^2_{L^2(I)}+\frac 12|(k_1^*-k_2^*)(t)|^2\|u''_0\|^2_{L^2(I)}
\\ & \quad 
+\frac 12 \|k_1\ast (v_1^*-v^*_2)_{xx}(t)\|^2_{L^2(I)}+\frac{1}{2}|(z^*_1-z_2^*)''(t)|^2,
\end{align*}
for a.e. $t\in [0,\delta]$. Integrating the above inequality from $0$ to $t$, we get 
\begin{align*}
& \|(v_1^*-v_2^*)_{xt}(t)\|^2_{L^2(I)}+\|(v_1^*-v_2^*)_{xx}(t)\|^2_{L^2(I)}
+\beta\|(v_1^*-v_2^*)_{xxt}(t)\|^2_{L^2(I)}
\nonumber\\ & 
\leq \|(v_1^*-v_2^*)_{xt}(0)\|^2_{L^2(I)}+\|(v_1^*-v_2^*)_{xx}(0)\|^2_{L^2(I)}
+\beta\|(v_1^*-v_2^*)_{xxt}(0)\|^2_{L^2(I)}
\\ & \quad
+4\int_0^t\|(v_1^*-v_2^*)_{xxt }(s)\|^2_{L^2(I)}ds+t\|k_1^*-k_2^*\|^2_{L^2(0,\delta)}\int_0^t \|(v_1)_{xx}(s)\|^2_{L^2(I)}ds
\\ & \quad +\|k_1^*-k_2^*\|^2_{L^2(0,t)}\|u''_0\|^2_{L^2(I)}
+ t\|k_1\|^2_{L^2(0,t)}\int_0^t \|(v_1^*-v^*_2)_{xx}(s)\|^2_{L^2(I)}ds+\|(z^*_1-z_2^*)''(t)\|^2_{L^2(0,t)},
\end{align*}
for all $t\in[0,\delta]$.  Using inequality \eqref{eqn-4.38} and Gronwall's inequality, we have
\begin{align*}
    & \sup\limits_{t\in [0,\delta]}\left(\|(v_1^*-v_2^*)_{xx}(t)\|^2_{L^2(I)}
+\|(v_1^*-v_2^*)_{xxt}(t)\|^2_{L^2(I)}\right)
\\ &\leq C\left(\|(v_1^*-v^*_2)_{xt}(0)\|_{L^2(I)},(\|(v_1^*-v^*_2)_{xt}(0)\|_{L^2(I)},\|(v_1^*-v^*_2)_{xxt}(0)\|_{L^2(I)}, \delta\right).
\end{align*}

Taking the inner product with $-(v_1^*-v_2^*)_{xxtt}(\cdot)$ to the equation \eqref{eqn-4.32}, we see that 
\begin{align*}
& \|(v_1^*-v_2^*)_{xtt}(t)\|^2_{L^2(I)}+\beta\|(v_1^*-v_2^*)_{xxtt}(t)\|^2_{L^2(I)}
\\ & 
=-((v_1^*-v_2^*)_{xx}(t),(v_1^*-v_2^*)_{xxtt}(t))
\\ & \quad 
-((k_1^*-k_2^*)\ast (v_1)_{xx}(t),(v_1^*-v_2^*)_{xxtt}(t))-((k_1^*-k_2^*) u''_0,(v_1^*-v_2^*)_{xxtt}(t))
\\ & \quad
-((k_1\ast (v^*_1-v^*_2)_{xx}(t),(v_1^*-v_2^*)_{xxtt}(t))+((z^*_1-z_2^*)''(t) \frac{x}{\ell},(v_1^*-v_2^*)_{xxtt}(t))
\\ & 
\leq \|(v_1^*-v_2^*)_{xx}(t)\|_{L^2(I)}\|(v_1^*-v_2^*)_{xxtt}(t)\|_{L^2(I)}
\nonumber \\ & \quad 
+\|(k_1^*-k_2^*)\ast (v_1)_{xx}(t)\|_{L^2(I)}\|(v_1^*-v_2^*)_{xxtt}(t)\|_{L^2(I)}
\\ & \quad 
+|(k_1^*-k_2^*)(t)|\|u''_0\|_{L^2(I)}\|(v_1^*-v_2^*)_{xxtt}(t)\|_{L^2(I)}
\\ & \quad 
+\|k_1\ast (v_1^*-v^*_2)_{xx}(t)\|_{L^2(I)}\|(v_1^*-v_2^*)_{xxtt}(t)\|_{L^2(I)}+|(z^*_1-z_2^*)''(t)|\|(v_1^*-v_2^*)_{xxtt}(t)\|_{L^2(I)}
\\ &  
\leq \frac {\beta}{2}\|(v_1^*-v_2^*)_{xxtt}(t)\|^2_{L^2(I)}+\frac {5}{2\beta}\|(k_1^*-k_2^*)\ast (v_1)_{xx}(t)\|^2_{L^2(I)}+\frac {5}{2\beta}|(k_1^*-k_2^*)(t)|^2\|u''_0\|^2_{L^2(I)}
\\ & \quad 
+ \frac {5}{2\beta} \|k_1\ast (v_1^*-v^*_2)_{xx}(t)\|^2_{L^2(I)}+\frac {5}{2\beta}|(z^*_1-z_2^*)''(t)|^2,
\end{align*}
for a.e. $t\in [0,\delta]$.
Integrating the above inequality from $0$ to $t$, we deduce
\begin{align*}
& \int_0^t\|(v_1^*-v_2^*)_{xtt}(s)\|^2_{L^2(I)}ds+\frac {\beta}{2}\int_0^t\|(v_1^*-v_2^*)_{xxtt}(s)\|^2_{L^2(I)}ds
\\ & 
\leq \frac {5}{2\beta}t\|k_1^*-k_2^*\|^2_{L^2(0,\delta)}\int_0^t \|(v_1)_{xx}(s)\|^2_{L^2(I)}ds
 +\frac {5}{2\beta}\|k_1^*-k_2^*\|^2_{L^2(0,t)}\|u''_0\|^2_{L^2(I)}
\\ & \quad + \frac {5}{2\beta}t\|k_1\|^2_{L^2(0,t)}\int_0^t \|(v_1^*-v^*_2)_{xx}(s)\|^2_{L^2(I)}ds+\frac {5}{2\beta}\|(z^*_1-z_2^*)''(t)\|^2_{L^2(0,t)}.
\end{align*}
Thus it is immediate that for $\delta\in (0,\tau_1\wedge (\tau-\tau_1)]$ 
\begin{align*}
    \|v_1^*-v_2^*\|_{H^2(0,\delta; H_0^1(I)\cap H^2(I))}\leq C(\delta)\|k_1^*-k_2^*\|_{L^2(0,\delta)}.
\end{align*}
Using \eqref{eqn-4.38} and Lemma \ref{lem-2.2} in the above estimate, we find
\begin{align*}
    \|v_1^*-v_2^*\|_{H^2(0,\delta; H_0^1(I)\cap H^2(I))}\leq C(\delta)\|v_1^*-v_2^*\|_{H^2(0,\delta; H_0^1(I)\cap H^2(I))}.
\end{align*}
If $ \delta$ is sufficiently small such that $\|v_1^*-v_2^*\|_{H^2(0,\delta; H_0^1(I)\cap H^2(I))}=0,$ then \eqref{eqn-4.38} ensures that $k_1^*-k_2^*$ also vanish in some neighbourhood of $0$, which contradicts the definition of $\tau_1.$ Hence, we conclude that $v_1=v_2$ and $k_1=k_2$, which completes the proof.
\end{proof}

\begin{theorem}[Global-in-time existence and uniqueness]\label{thm-4.3}
 Let the assumptions $(i1)$-$(i5)$ hold and  $T>0.$ Then the inverse problem has a unique solution
\begin{align*}
    (v,k) \in  H^2(0,T;H_0^1(I)\cap H^2(I)) \times H^1(0,T).
\end{align*}
\end{theorem}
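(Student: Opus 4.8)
The uniqueness assertion is exactly Theorem \ref{thm-4.2}, so the only task is to promote the local solution furnished by Theorem \ref{thm-4.1} to a solution on all of $[0,T]$; the plan for that is a continuation argument built on the gluing Lemma \ref{lem-4.1}. First, by Theorem \ref{thm-4.2} any two local solutions coincide on the overlap of their time intervals, so there is a well-defined maximal solution $(v,k)$ on a half-open interval $[0,\tau^*)$, where $\tau^*:=\sup\{\tau\in(0,T]:\text{the system }\eqref{eqn-3.2}\text{--}\eqref{eqn-3.6}\text{ is solvable on }[0,\tau]\}$; Theorem \ref{thm-4.1} guarantees $\tau^*>0$, and it then suffices to exclude the case $\tau^*<T$.

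Suppose $\tau^*<T$. The first step will be an a priori estimate showing that $\|v\|_{H^2(0,t;H_0^1(I)\cap H^2(I))}+\|k'\|_{L^2(0,t)}$ stays bounded by a constant depending only on $T$ and the data as $t\uparrow\tau^*$, so that the maximal solution actually lies in $H^2(0,\tau^*;H_0^1(I)\cap H^2(I))\times H^1(0,\tau^*)$ and solves \eqref{eqn-3.2}--\eqref{eqn-3.6} on the closed interval $[0,\tau^*]$; in particular $v(\cdot,\tau^*)$ and $v_t(\cdot,\tau^*)$ lie in $H_0^1(I)\cap H^2(I)$ by the embedding $H^2(0,\tau^*;X)\hookrightarrow C^1([0,\tau^*];X)$. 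To obtain this estimate I would rewrite \eqref{eqn-3.2} as the linear problem $v_{tt}-v_{xx}-\beta v_{xxtt}=K$ with $K:=-k(t)u_0''(x)-(k\ast v_{xx})(x,t)+z''\frac{x}{\ell}$ and $z''=py'''+qy''$, apply the linear energy estimate of Appendix \ref{linear-EE} to bound $\|v\|_{H^2(0,t;H_0^1(I)\cap H^2(I))}$ by the data together with $\|K\|_{L^2(0,t;L^2(I))}$, control $\|K\|_{L^2(0,t;L^2(I))}$ via Lemmas \ref{lem-2.1}--\ref{lem-2.2} in terms of $\|k\|_{L^2(0,t)}$, $\|v\|_{L^2(0,t;H^2(I))}$, $\|y''\|_{L^2(0,t)}$ and $\|y'''\|_{L^2(0,t)}$, and bound $\|k'\|_{L^2(0,t)}$ and $\|y'''\|_{L^2(0,t)}$ from the Volterra identities \eqref{eqn-3.5}--\eqref{eqn-3.6} exactly as in Step 1 of the proof of Theorem \ref{thm-4.1}; closing the resulting coupled family of integral inequalities by Gronwall's inequality on $[0,T]$ then delivers the bound.

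The second step is the extension past $\tau^*$. I would apply Lemma \ref{lem-4.1} with $\tau=\tau^*$ and some $\tau'\in(\tau^*,\,2\tau^*\wedge T]$, a non-empty set since $\tau^*\in(0,T)$: by conditions (A1)--(A7) of that lemma it is enough to produce a solution $(v_{\tau^*},k_{\tau^*})\in H^2(0,\delta;H_0^1(I)\cap H^2(I))\times H^1(0,\delta)$ of the shifted system \eqref{eqn-4.18}--\eqref{eqn-4.22} for some $\delta\in(0,\,\tau^*\wedge(T-\tau^*)]$. That system has exactly the structure of \eqref{eqn-3.2}--\eqref{eqn-3.6}: the same coefficient $u_0''$, the same test function $\varphi$ and hence the same constants $\alpha$ and $\psi(\ell)$, initial data $v(\cdot,\tau^*),v_t(\cdot,\tau^*)\in H_0^1(I)\cap H^2(I)$, datum $f_{\tau^*}\in H^4$, and compatibility relations inherited automatically from the solution evaluated at $t=\tau^*$ — the only new feature being the single \emph{known} forcing term $g\in H^1(0,\delta;L^2(I))$ from (A5), built from the already-constructed solution on $[0,\tau^*]$. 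Hence the contraction-mapping argument of Theorem \ref{thm-4.1} carries over, with $g$ (and the extra known convolutions appearing in \eqref{eqn-4.18}, \eqref{eqn-4.21}, \eqref{eqn-4.22}) merely absorbed into the forcing estimates, and it produces such a $(v_{\tau^*},k_{\tau^*})$. Gluing through \eqref{eqn-4.23} and part (A7) of Lemma \ref{lem-4.1} then yields a solution of \eqref{eqn-3.2}--\eqref{eqn-3.6} on $[0,\tau^*+\delta]$ with $\tau^*+\delta>\tau^*$, contradicting the definition of $\tau^*$. Therefore $\tau^*=T$, and the a priori estimate of the first step, read on $[0,T]$, gives the claimed regularity $v\in H^2(0,T;H_0^1(I)\cap H^2(I))$ and $k\in H^1(0,T)$; together with Theorem \ref{thm-4.2} this completes the proof.

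The step I expect to be the main obstacle is the a priori estimate used to reach the closed interval $[0,\tau^*]$. Because \eqref{eqn-3.2}--\eqref{eqn-3.6} couples $v$, $k$ and $y$ through Volterra convolutions — $k\ast v_{xx}$ in \eqref{eqn-3.2}, $\int_0^t\int_0^\ell k'(t-s)v(x,s)\varphi'''(x)\,dx\,ds$ in \eqref{eqn-3.5}, and their counterparts in \eqref{eqn-3.6} — the bounds for $\|v\|$, $\|k'\|$ and $\|y'''\|$ are intertwined and cannot be closed by the smallness-in-time absorption available in Theorem \ref{thm-4.1}. The safe route is to reproduce the energy computations from the proof of Theorem \ref{thm-4.2} (testing \eqref{eqn-3.2} successively against $v_t$, $v_{tt}$, $-v_{xxt}$ and $-v_{xxtt}$), but now for the solution itself rather than a difference of two solutions, keeping explicit track of the finite $T$-dependent constants multiplying each integral term so that a single application of Gronwall's inequality on the fixed interval $[0,T]$ closes the estimate.
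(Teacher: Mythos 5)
Your overall architecture (uniqueness from Theorem \ref{thm-4.2}, a maximal existence time, an a priori bound up to that time, then an extension via Lemma \ref{lem-4.1} and a contraction argument as in Theorem \ref{thm-4.1}, leading to a contradiction) is exactly the paper's strategy, and your Step 2 is essentially the paper's Lemma \ref{lem-4.2}. The genuine gap is in Step 1, the a priori estimate up to $\tau^*$. You propose to bound $\|v\|_{H^2(0,t;H_0^1(I)\cap H^2(I))}+\|k'\|_{L^2(0,t)}$ on the whole interval by running the energy computations of Theorem \ref{thm-4.2} ``for the solution itself'' and closing with a single Gronwall on $[0,T]$. But those computations are linear in the difference $v_1^*-v_2^*$, $k_1^*-k_2^*$ precisely because every convolution there has a \emph{known} solution ($k_1$, $v_1$) as coefficient. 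For the solution itself, the terms $k\ast v_{xx}$ in \eqref{eqn-3.2} and $\int_0^t\int_0^\ell k'(t-s)v(x,s)\varphi'''(x)\,dx\,ds$ in \eqref{eqn-3.5} are products of two unknowns: the energy inequality then contains $\|k\|_{L^2(0,t)}^2\int_0^t E$, and the Volterra bound for $k$ obtained from \eqref{eqn-3.7} is itself exponential in $\int_0^t\|v\|$, so the resulting integral inequality for the energy is superlinear. A single (linear) Gronwall application does not close such a system on a fixed interval $[0,T]$; generically it only yields local-in-time bounds, which is no improvement over Theorem \ref{thm-4.1}. ``Keeping track of the $T$-dependent constants'' does not remove this bilinear obstruction.

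The paper circumvents this not by a direct global estimate but by Lemma \ref{lem-4.4} combined with Lemma \ref{lem-4.3}: one estimates only the shifted piece $v_\tau(t)=v(\tau+t)$, $k_\tau(t)=k(\tau+t)$ on $[0,\delta]$ with $\delta\le\tau\wedge(T-\tau)$, where, thanks to the splitting \eqref{eqn-4.29}, every convolution in \eqref{eqn-4.18}--\eqref{eqn-4.22} is \emph{linear} in the new unknowns $(v_\tau,k_\tau')$ with coefficients given by the already-constructed (hence fixed) solution $(\hat v,\hat k)$ on $[0,\tau]$, plus a known forcing $g$. The estimate therefore closes linearly (Lemma \ref{lem-4.3} absorbing the intermediate $\|(v_\tau)_t\|_{L^2(0,t;H_0^1)}$ norm via Gagliardo--Nirenberg and Gronwall), at the price of a constant depending on $(\hat v,\hat k)$; in the proof of Theorem \ref{thm-4.3} this is harmless because one fixes $\tau=\widehat T/2$ and obtains a bound uniform over all $\tau'\in[\widehat T/2,\widehat T)$, which is all the continuation argument needs. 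Note also that your Step 1 claims more than is needed (a bound depending only on the data and $T$); the paper's weaker, solution-dependent bound suffices. To repair your proof, replace the direct global Gronwall by this ``known past plus linear increment'' estimate (or supply an argument that genuinely closes the superlinear inequality, which the sketch does not).
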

To prove the global solvability of the inverse problem \eqref{eqn-2.2}--\eqref{eqn-wave-equiv}, we prove the result for its equivalent form, that is, for any given time $T>0,$ there exists a solution to the system \eqref{eqn-3.2}--\eqref{eqn-3.6}. The following Lemmas will be useful in the sequel.

\begin{lemma}\label{lem-4.2}
Let the assumptions $(i1)$-$(i5)$ hold. Let $\tau\in(0,T)$ and 
\begin{align*}
    (\hat v,\hat k) \in  H^2(0,\tau;H_0^1(I)\cap H^2(I)) \times H^1(0,\tau).
\end{align*}
solves the system \eqref{eqn-3.2}--\eqref{eqn-3.6} on $[0,\tau].$ Then there exists $\delta \in (0,T-\tau]$ such that the solution $(\hat v,\hat k)$ can be extended to a solution $(v,k)$ on $[0,\tau+\delta]$, such that
\begin{align*}
    (v,k) \in  H^2(0,\tau+\delta;H_0^1(I)\cap H^2(I)) \times H^1(0,\tau+\delta).
\end{align*}
\end{lemma}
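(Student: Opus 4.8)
The plan is to run the contraction-mapping scheme of Theorem~\ref{thm-4.1} on the \emph{shifted} system \eqref{eqn-4.18}--\eqref{eqn-4.22} produced in Lemma~\ref{lem-4.1}, and then to glue the resulting short-time solution onto $(\hat v,\hat k)$ by assertion (A7) of that lemma. Concretely, take $k(\tau)$ as the new initial value of the kernel, $u_\tau(\cdot):=\hat v(\cdot,\tau)$ and $u_{\tau\tau}(\cdot):=\hat v_t(\cdot,\tau)$ as the new initial data (these lie in $H_0^1(I)\cap H^2(I)$ by (A3)), and $f_\tau$ as the new overdetermination datum (in $H^4(0,T-\tau)$ by (A4)). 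The memory tail $g(x,t)=\int_t^{\tau}\hat k(\tau+t-s)\hat v_{xx}\,ds$, together with the two tail integrals $\int_t^{\tau}\!\int_0^\ell \hat k'(\tau+t-s)\hat v\varphi'''\,dxds$ and $\int_t^{\tau}\hat k'(\tau+t-s)G[\hat v_{xx}](s)\,ds$ appearing in \eqref{eqn-4.21}--\eqref{eqn-4.22}, are \emph{fixed} functions; by (A5), $g\in H^1(0,T-\tau;L^2(I))$, and the other two belong to $L^2(0,T-\tau)$, with all three norms controlled by $\|\hat v\|_{H^2(0,\tau;H_0^1(I)\cap H^2(I))}+\|\hat k\|_{H^1(0,\tau)}<\infty$.

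Next I would fix $\delta\in(0,\tau\wedge(T-\tau)]$ and $M>0$, introduce the obvious analogue $Q_\tau(\delta,M)$ of $Q(\tau,M)$ and the analogue $A_\tau$ of the map $A$, now sending $(\tilde v,\tilde k')$ to $(v_\tau,k_\tau')$ through \eqref{eqn-4.21} (for $k_\tau'$), the linear wave problem \eqref{eqn-4.18}--\eqref{eqn-4.20} (for $v_\tau$), and \eqref{eqn-4.22} (for $y_\tau'''$). The only structural novelties relative to $A$ are the fixed forcing $g$ entering the right-hand side of the linear problem, the fixed tail integrals entering \eqref{eqn-4.21}--\eqref{eqn-4.22}, and Volterra terms of the form $\tilde k'\ast(\hat v\varphi''')$ and $\hat k'\ast(\tilde v\varphi''')$ in which the iterate is convolved against a fixed $L^2(0,\tau;L^2(I))$ function. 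Estimating exactly as in \eqref{eqn-4.4}--\eqref{eqn-4.10} — using Lemma~\ref{lem-2.1}, Lemma~\ref{lem-2.2}, and the linear energy estimate of Appendix~\ref{linear-EE} — every fixed term merely enlarges the nondecreasing ``constant'' $M_0=M_0(\delta)$ (finite by the bounds above), while every iterate-dependent contribution, including the new Volterra terms, carries a positive power of $\delta$. Hence, choosing $M\ge M_0$ and then $\delta$ small enough that $\delta^2(1+M)\le 1$ and the Lipschitz constant is $<1$, $A_\tau:Q_\tau(\delta,M)\to Q_\tau(\delta,M)$ is a contraction, and the Banach fixed point theorem yields a unique $(v_\tau,k_\tau)\in H^2(0,\delta;H_0^1(I)\cap H^2(I))\times H^1(0,\delta)$ solving \eqref{eqn-4.18}--\eqref{eqn-4.22} on $[0,\delta]$. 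Finally, invoking (A7) of Lemma~\ref{lem-4.1}, the pair $(v,k)$ equal to $(\hat v,\hat k)$ on $[0,\tau]$ and to $(v_\tau(\cdot,\cdot-\tau),k_\tau(\cdot-\tau))$ on $[\tau,\tau+\delta]$ lies in $H^2(0,\tau+\delta;H_0^1(I)\cap H^2(I))\times H^1(0,\tau+\delta)$ and solves \eqref{eqn-3.2}--\eqref{eqn-3.6} on $[0,\tau+\delta]$, which is the asserted extension.

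The contraction step is a verbatim repetition of Theorem~\ref{thm-4.1}, so the real point of the proof is the admissibility of the shifted data: one must know that $u_\tau,u_{\tau\tau}\in H_0^1(I)\cap H^2(I)$ and $f_\tau\in H^4(0,T-\tau)$; that the nondegeneracy conditions $\alpha^{-1}=\int_0^\ell\varphi'(x)u_0''(x)\,dx\neq0$ and $\psi(\ell)\neq0$ still hold (they are unaffected by the shift, since $\varphi$ and $u_0$ are unchanged and $\widehat G[u_{xx}](0)$, $u_0''$, $u_0'$ in \eqref{eqn-4.18}--\eqref{eqn-4.22} still refer to the original data); and — the most delicate ingredient — that the memory tail $g$ and the two tail integrals have the $H^1(0,T-\tau;L^2(I))$, respectively $L^2(0,T-\tau)$, regularity needed to be absorbed into the linear energy estimate and into the integral equation \eqref{eqn-4.21}. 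All of these are precisely assertions (A1)--(A6) of Lemma~\ref{lem-4.1}, which are already established; once they are in hand, the present proof is bookkeeping. I would emphasize that the $\delta$ obtained here depends on $\|\hat v\|_{H^2(0,\tau;H_0^1(I)\cap H^2(I))}+\|\hat k\|_{H^1(0,\tau)}$ and is \emph{not} claimed to be bounded below uniformly in $\tau$; obtaining such a uniform lower bound, via an a priori estimate, is exactly what is needed to upgrade this local-extension lemma to the global result of Theorem~\ref{thm-4.3}.
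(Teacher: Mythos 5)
Your proposal is correct and follows essentially the same route as the paper: reduce to the shifted system \eqref{eqn-4.18}--\eqref{eqn-4.22} via Lemma \ref{lem-4.1}, run the contraction-mapping argument of Theorem \ref{thm-4.1} on a ball of $H^2(0,\delta;H_0^1(I)\cap H^2(I))\times L^2(0,\delta)$ (the paper's map $B$ on $\Gamma(\delta,L)$, your $A_\tau$ on $Q_\tau(\delta,M)$), treating the memory tails as fixed data, and glue via (A7). In fact you spell out the estimates and the dependence of $\delta$ on $\|\hat v\|_{H^2(0,\tau;H_0^1(I)\cap H^2(I))}+\|\hat k\|_{H^1(0,\tau)}$ more explicitly than the paper, which simply refers back to the proof of Theorem \ref{thm-4.1}.
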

\begin{proof}
From the conditions (A1)-(A7), it suffices to show that the system \eqref{eqn-4.18}--\eqref{eqn-4.22}, with $g, u_{\tau},u_{\tau \tau}$ be as defined in Lemma \ref{lem-4.1} has a solution
\begin{align*}
    (v_{\tau},k_{\tau}) \in  H^2(0,\delta;H_0^1(I)\cap H^2(I)) \times H^1(0,\delta).
\end{align*}
for some $\delta \in (0,T-\tau].$ 

 Let $\Gamma(\delta,L)$ be the space of functions
 $(\tilde v,\tilde k')\in H^2(0,\delta;H_0^1(I)\cap H^2(I))\times L^2(0,\delta)$ such that
\begin{align*}
    \|\tilde v\|_{H^2(0,\delta;H_0^1(I)\cap H^2(I))}+\|\tilde k'\|_{L^2(0,\delta)}\leq L
\end{align*}
 and \eqref{eqn-4.19}--\eqref{eqn-4.20} hold on $(0,\delta)$. The constant $L>0$ will be determined later.

We define the mapping $B: \Gamma(\delta, L)\rightarrow  \Gamma(\delta, L)$ such that $(\tilde v,\tilde k')\rightarrow  (v,k')$ through
\begin{align}\label{eqn-4.40}
& (v_{\tau})_{tt}-(v_{\tau})_{xx}-\beta (v_{\tau})_{xxtt}+\tilde k_{\tau}u''_0(x)+\int_0^t \tilde k_{\tau}(t-s)\hat v_{xx}
(x,s)\,ds
\nonumber \\ & +\int_0^t \hat k(t-s) (\tilde v_\tau)_{xx}(x,s)\,ds
+g(x,t) =z_{\tau}''\frac x\ell,\ (x,t)\in I\times [0,\delta],
\end{align}
\begin{align}\label{eqn-4.41}
    v_{\tau}\big|_{x=0}=v_{\tau}\big|_{x=\ell}=0, \quad 0\leq t\leq \delta, 
\end{align}
\begin{align}\label{eqn-4.42}
    v_{\tau}\big|_{t=0}=u_{\tau}(x), \ (v_{\tau})_t\big|_{t=0}=u_{\tau \tau}(x), \quad  x\in I,
\end{align}
with
\begin{align}\label{eqn-4.43}
    k_{\tau}'(t) & =\alpha\Bigg\{f_{\tau}^{(iv)}(t)+\int_0^\ell \tilde v_t(x,t)\varphi'''(x)dx
-k(0)\int_0^\ell  \tilde v_{\tau}(x,t)\varphi'''(x)dx
\nonumber \\ & \quad  -\int_0^t\int_0^\ell \tilde k_{\tau}'(t-s) \hat v(x,s)\varphi'''(x)dxds
-\int_0^t\int_0^\ell \hat k'(t-s) \tilde v_{\tau}(x,s)\varphi'''(x)dxds
\nonumber \\ & \quad 
-\int_t^{\tau}\int_0^\ell \hat k'(\tau+t-s)\hat v(x,s)\varphi'''(x)dxds\Bigg\}, \;\;\; t\in [0,\delta],
\end{align}
where $z_{\tau}'':=py_{\tau}'''+qy_{\tau}''$
and
\begin{align}\label{eqn-4.44}
    y_{\tau}'''(t)&:=G'[(\tilde v_{\tau})_{xx}](t)-k_{\tau}'(t)\widehat{G}[u_{xx}](0)
- k(0)G[(\tilde v_{\tau})_{xx}](t)
\nonumber \\ & \quad 
-\int_0^t\hat k'(t-s)G[(\tilde v_{\tau})_{xx}](s)\,ds-\int_0^t\tilde k_{\tau}'(t-s)G[\hat v_{xx}](s)\,ds
\nonumber \\ & \quad 
-\int_t^{\tau}\hat k'(\tau+t-s)G[\hat v_{xx}](s)\,ds, \quad  t\in [0,\delta].
\end{align}
We just need to show that the map $B: \Gamma(\delta, L)\rightarrow  \Gamma(\delta, L)$ is a contraction map. The proof constitutes similar arguments as in the proof of Theorem \ref{thm-4.1}.
\end{proof}

\begin{lemma}\label{lem-4.3}
    Let $T\in \mathbb{R}_+,\ \tau\in (0,T].$ Let $\gamma_1,\gamma_2 \in \mathbb{R}_+$ and $v\in H^2(0,\tau;H_0^1(I)\cap H^2(I))$ be such that, for all $t\in [0,\tau]$,
\begin{align}\label{eqn-4.45}
    \|v\|^2_{H^2(0,t;H_0^1(I)\cap H^2(I))}\leq \gamma_1+\gamma_2\|v_t\|^2_{L^2(0,t;H_0^1(I))}.
\end{align}
Then
\begin{align}\label{eqn-4.46}
    \|v\|_{H^2(0,t;H_0^1(I)\cap H^2(I))}\leq C\left(\gamma_1,\gamma_2,\|v_t(0)\|_{L^2(I)}, T\right),
\end{align}
where $C\left(\gamma_1,\gamma_2,\|v_t(0)\|_{L^2(I)}, T\right) \in \mathbb{R}_+$.
\end{lemma}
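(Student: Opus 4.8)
The plan is to turn the hypothesis \eqref{eqn-4.45} into a closed Gronwall inequality for the scalar nondecreasing quantity
\[
\Phi(t):=\|v\|^2_{H^2(0,t;H_0^1(I)\cap H^2(I))},\qquad t\in[0,\tau].
\]
Since $\Phi(t)=\int_0^t\big(\|v(s)\|^2_{H_0^1(I)\cap H^2(I)}+\|v_t(s)\|^2_{H_0^1(I)\cap H^2(I)}+\|v_{tt}(s)\|^2_{H_0^1(I)\cap H^2(I)}\big)\,ds$, it is absolutely continuous, and—discarding the first two nonnegative summands—it controls $\int_0^t\|v_{tt}(s)\|^2_{H_0^1(I)}\,ds\le\Phi(t)$ for every $t\in[0,\tau]$. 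This last observation is what will let me close the estimate.

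The first step is to use the fundamental theorem of calculus in time: since $v\in H^2(0,\tau;H_0^1(I)\cap H^2(I))$ one has $v_t\in H^1(0,\tau;H_0^1(I)\cap H^2(I))\hookrightarrow C([0,\tau];H_0^1(I)\cap H^2(I))$ (cf. Remark \ref{Rem-2.1}), so $v_t(s)=v_t(0)+\int_0^s v_{tt}(r)\,dr$ holds in $H_0^1(I)$. Cauchy--Schwarz and $s\le T$ then give
\[
\|v_t(s)\|^2_{H_0^1(I)}\le 2\|v_t(0)\|^2_{H_0^1(I)}+2T\int_0^s\|v_{tt}(r)\|^2_{H_0^1(I)}\,dr\le 2\|v_t(0)\|^2_{H_0^1(I)}+2T\,\Phi(s),
\]
and integrating in $s\in(0,t)$ yields $\|v_t\|^2_{L^2(0,t;H_0^1(I))}\le 2t\,\|v_t(0)\|^2_{H_0^1(I)}+2T\int_0^t\Phi(s)\,ds$ (equivalently, apply Lemma \ref{lem-2.2} to $w:=v_t-v_t(0)=1\ast v_{tt}$). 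Substituting this into \eqref{eqn-4.45}, I obtain, for all $t\in[0,\tau]$,
\[
\Phi(t)\le \gamma_1+2\gamma_2 T\,\|v_t(0)\|^2_{H_0^1(I)}+2\gamma_2 T\int_0^t\Phi(s)\,ds,
\]
and Gronwall's inequality then gives $\Phi(t)\le\big(\gamma_1+2\gamma_2 T\,\|v_t(0)\|^2_{H_0^1(I)}\big)e^{2\gamma_2 T^2}$; taking square roots produces \eqref{eqn-4.46} with $C$ of the asserted form. (The bound naturally carries $\|v_t(0)\|_{H_0^1(I)}$ rather than only $\|v_t(0)\|_{L^2(I)}$, which is immaterial in the applications, where $v_t(0)=v_1$ is prescribed data belonging to $H_0^1(I)\cap H^2(I)$.)

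I do not expect a genuine obstacle—the argument is elementary—but two points must not be mishandled. First, one should keep $\int_0^t\Phi(s)\,ds$ intact and invoke Gronwall's inequality rather than bounding it crudely by $t\,\Phi(t)$; the latter closes the estimate only for small $t$, whereas the conclusion is required on all of $[0,\tau]\subseteq[0,T]$. Second, it is essential that the left-hand side of \eqref{eqn-4.45} carries the full $H^2$-in-time norm, since that is precisely what furnishes $\int_0^t\|v_{tt}\|^2_{H_0^1(I)}\le\Phi(t)$, the inequality used to close the loop; had the right-hand side instead involved $\|v\|_{H^2(0,t;\cdot)}$, this route would fail.
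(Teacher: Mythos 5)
Your argument is correct as a chain of estimates, and it is structurally close to the paper's proof (fundamental theorem of calculus for $v_t$, the observation that $\int_0^t\|v_{tt}\|^2\,ds\le\Phi(t)$, then Gronwall), but it omits the one ingredient that the paper needs to get the conclusion \emph{as stated}: the statement asserts a constant depending only on $\|v_t(0)\|_{L^2(I)}$, whereas your bound depends on $\|v_t(0)\|_{H_0^1(I)}$, and a constant involving the $H^1$-norm of the initial velocity cannot be rewritten as a function of its $L^2$-norm alone. The paper avoids this by first interpolating: by Gagliardo--Nirenberg and Young, $\|v_t\|^2_{L^2(0,t;H_0^1(I))}\le C\varepsilon\|v_t\|^2_{L^2(0,t;H_0^1(I)\cap H^2(I))}+C(\varepsilon)\|v_t\|^2_{L^2(0,t;L^2(I))}$, and choosing $\varepsilon$ with $\gamma_2C\varepsilon\le\tfrac12$ absorbs the first term into the left-hand side of \eqref{eqn-4.45}, leaving $\|v\|^2_{H^2(0,t;H_0^1(I)\cap H^2(I))}\le 2\gamma_1+C(\gamma_2)\|v_t\|^2_{L^2(0,t;L^2(I))}$. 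Only then does the paper run your second step, but entirely at the $L^2(I)$ level: $v_t(s)=v_t(0)+\int_0^s v_{tt}(\eta)\,d\eta$ gives $\|v_t\|^2_{L^2(0,t;L^2(I))}\le 2t\|v_t(0)\|^2_{L^2(I)}+2\int_0^t s\,z(s)\,ds$ with $z(t):=\|v_{tt}\|^2_{L^2(0,t;L^2(I))}\le\Phi(t)$, and Gronwall for $z$ closes the loop with a constant $C(\gamma_1,\gamma_2,\|v_t(0)\|_{L^2(I)},T)$. So the interpolation step is not a cosmetic refinement; it is precisely what downgrades the initial-data dependence from $H_0^1$ to $L^2$. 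You are right that for the application in Lemma \ref{lem-4.4} your weaker version would also suffice (there $\gamma_1$ already contains $\|u_{\tau\tau}\|^2_{H_0^1(I)\cap H^2(I)}$), but to prove the lemma exactly as formulated you should insert the Gagliardo--Nirenberg/Young absorption before invoking the fundamental theorem of calculus and Gronwall.
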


\begin{proof}
 Applying Gagliardo-Nirenberg's and Young's inequalities, we obtain, for all $\varepsilon \in \mathbb{R}_+,$
\begin{align}\label{eqn-4.47}
    \|v_t\|^2_{L^2(0,t;H_0^1(I))} & \leq C\|v_t\|_{L^2(0,t;L^2(I))}\|v_t\|_{L^2(0,t;H_0^1(I)\cap H^2(I))}
\nonumber \\ &
\leq C\left(\varepsilon\|v_t\|^2_{L^2(0,t;H_0^1(I)\cap H^2(I))}+C(\varepsilon)\|v_t\|^2_{L^2(0,t;L^2(I))}\right).
\end{align}
Substituting \eqref{eqn-4.47} in \eqref{eqn-4.45}, we have
\begin{align}\label{eqn-4.48}
    \|v\|^2_{H^2(0,t;H_0^1(I)\cap H^2(I))}\leq \gamma_1+\gamma_2C\varepsilon \|v_t\|^2_{L^2(0,t;H_0^1(I)\cap H^2(I))}+\gamma_2C(\varepsilon)\|v_t\|^2_{L^2(0,t;L^2(I))},
\end{align}
for every $\varepsilon\in \mathbb{R}_+.$ Choosing $\varepsilon$ in such a way that $\gamma_2C\varepsilon 
\leq \frac 12$, we get
\begin{align}\label{eqn-4.49}
    \|v\|^2_{H^2(0,t;H_0^1(I)\cap H^2(I))}\leq
2\gamma_1+C(\gamma_2)\|v_t\|^2_{L^2(0,t;L^2(I))}.
\end{align}
Since $v\in H^2(0,t;H_0^1(I)\cap H^2(I))$, $v_t$ is absolutely continuous (see \cite[Sec. 5.9, Theorem 2]{Evans_2010}).
Hence, for all $s \in (0,\tau]$,
\begin{align*}
    v_t(s)=v_t(0)+\int_0^sv_{tt}(\eta)d\eta,
\end{align*}
and 
\begin{align*}
    \|v_t(s)\|^2_{L^2(I)}\leq 2\|v_t(0)\|^2_{L^2(I)}+2\left\|\int_0^sv_{tt}(\eta)d\eta\right\|^2_{L^2(I)}.
\end{align*}
Integrating the above estimate from $0$ to $t$, we obtain
\begin{align}\label{eqn-4.50}
    \|v_t\|^2_{L^2(0,t;L^2(I))}& \leq 2t|\|v_t(0)\|^2_
{L^2(I)} + 2\int_0^t\left\|\int_0^s v_{tt}(\eta)d\eta\right\|^2_{L^2(I)}ds
\nonumber \\ &
\leq 2t\|v_t(0)\|^2_{L^2(I)} + 2
 \int_0^ts\|v_{tt}\|^2_{L^2(0,s;L^2(I))}ds.
\end{align}
If we set $z(t):= \|v_{tt}\|^2_{L^2(0,t;L^2(I))}$, from \eqref{eqn-4.49}, we get, for all $t\in [0, \tau]$,
\begin{align}\label{eqn-4.51}
    z(t) \leq 2\gamma_1 + C(\gamma_2)t\|v_t(0)\|^2_{L^2(I)} + C(\gamma_2)\int_0^tsz(s)ds.
\end{align}
An application of Gronwall's inequality in \eqref{eqn-4.51} yields
\begin{align}\label{eqn-4.52}
    z(t)\leq C (\gamma_1,\gamma_2, \|v_t(0)\|^2_{L^2(I)},T).
\end{align}
Using \eqref{eqn-4.52} in \eqref{eqn-4.50}, for all $t \in (0,\tau]$, we have
\begin{align}\label{eqn-4.53}
    \|v_t\|^2_{L^2(0,t;L^2(I))} & \leq 2t|\|v_t(0)\|^2_
{L^2(I)} + 2 \int_0^tsC (\gamma_1,\gamma_2, \|v_t(0)\|^2_{L^2(I)},T)ds
\nonumber \\ & 
\leq 2t|\|v_t(0)\|^2_
{L^2(I)} + t^2C (\gamma_1,\gamma_2, \|v_t(0)\|^2_{L^2(I)},T).
\end{align}
From \eqref{eqn-4.49} and \eqref{eqn-4.53}, we conclude
\begin{align*}
    \|v\|_{H^2(0,t;H^1_0(I)\cap H^2(I))} \leq C(\gamma_1,\gamma_2, \|v_t(0)\|^2_{L^2(I)},T),
\end{align*}
which completes the proof. 
\end{proof}

Let us now provide an a-priori estimate on $(v, k)$ defined by \eqref{eqn-4.23}. Such an estimate is of crucial importance in the proof of Theorem \ref{thm-4.3}.

\begin{lemma}\label{lem-4.4}
Let assumptions $(i1)$-$(i5)$ hold. Let
\begin{align*}
    (\hat v,\hat k) \in H^2(0,\tau;H^1_0(I) \cap H^2(I))\times H^1(0,\tau)
\end{align*}
be a solution of the system \eqref{eqn-3.2}--\eqref{eqn-3.6} in $[0, \tau]$ with $0 < \tau < T$. If
\begin{align*}
    (v, k)\in H^2(0, \tau + \delta; H^1_0(I) \cap H^2(I))\times H^1(0,\tau+\delta)
\end{align*}
is a solution of the system \eqref{eqn-3.2}--\eqref{eqn-3.6}  in $[0,\tau + \delta]$, then, we can bound $(v, k)$ and there
exists $C > 0$, such that, for all $\delta \in (0, \tau \wedge (T-\tau )]$,
\begin{align*}
    \|v\|_{H^2(0, \tau + \delta; H^1_0(I) \cap H^2(I))} + \|k'\|_{L^2(0,\tau + \delta)}\leq C.
\end{align*}
\end{lemma}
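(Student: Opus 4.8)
The plan is to localise the bound to the newly gained time window $[\tau,\tau+\delta]$ and to close there a \emph{single} Gronwall inequality whose constants are independent of $\delta$; the point to keep in mind is that, unlike in Theorem~\ref{thm-4.1}, we are \emph{not} allowed to take $\delta$ small. On $[0,\tau]$ the solution $(v,k)$ coincides with $(\hat v,\hat k)$, whose norm is a fixed finite number, so it suffices to bound $v_\tau:=v(\cdot,\tau+\cdot)$ in $H^2(0,\delta;H_0^1(I)\cap H^2(I))$ and $k_\tau':=k'(\tau+\cdot)$ in $L^2(0,\delta)$. By Lemma~\ref{lem-4.1} (conditions (A1)--(A6)) the pair $(v_\tau,k_\tau)$ solves \eqref{eqn-4.18}--\eqref{eqn-4.22}; since $\delta\le\tau$, the ingredients $\hat v$, $\hat k$ and the history term $g$ of (H6) are available on all of $[0,\delta]$, and Sobolev embedding in time controls $u_\tau=v(\cdot,\tau)$, $u_{\tau\tau}=v_t(\cdot,\tau)$ in $H_0^1(I)\cap H^2(I)$, the numbers $k(\tau)$, $y''(\tau)$, and $\|g\|_{H^1(0,\delta;L^2(I))}$, all by quantities depending only on $\|\hat v\|_{H^2(0,\tau;H_0^1(I)\cap H^2(I))}$, $\|\hat k\|_{H^1(0,\tau)}$ and the data of \eqref{eqn-wave}--\eqref{eqn-integral-overdetermination}. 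In what follows $C_1,C_2$ denote generic constants with exactly this dependence (in particular independent of $\delta$).

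First I would handle the kernel and boundary unknowns. Reading \eqref{eqn-4.21} as a scalar Volterra equation for $k_\tau'$, the only term containing $k_\tau'$ on the right-hand side is a convolution $k_\tau'\ast b$ with $b(s)=\int_0^\ell\hat v(x,s)\varphi'''(x)\,dx$ bounded (because $\hat v\in C([0,\tau];L^2(I))$); a one-dimensional Gronwall argument on $\int_0^t|k_\tau'|$ therefore absorbs it, while Lemma~\ref{lem-2.1} and Young's convolution inequality dispose of the remaining terms, giving, for all $t\in[0,\delta]$,
\[
\|k_\tau'\|_{L^2(0,t)}^2\le C_1+C_2\int_0^t\bigl(\|v_{\tau,t}(s)\|_{L^2(I)}^2+\|v_{\tau,x}(s)\|_{L^2(I)}^2\bigr)\,ds .
\]
Feeding this into \eqref{eqn-4.22}, using that $G[\hat v_{xx}]$ is bounded and that $G[(v_\tau)_{xx}]$, $G'[(v_\tau)_{xx}]$ are controlled pointwise in $t$ by $|f_\tau'|+\|v_{\tau,xx}(\cdot,t)\|_{L^2(I)}$ and $|f_\tau''|+\|v_{\tau,xxt}(\cdot,t)\|_{L^2(I)}$, one obtains a bound of the same shape for $\|y_\tau'''\|_{L^2(0,t)}^2$, and then, via $z_\tau''=py_\tau'''+qy_\tau''$ with $y_\tau''=y''(\tau)+1\ast y_\tau'''$, for $\|z_\tau''\|_{L^2(0,t)}^2$; in particular all three are $\le C_1+C_2\int_0^t\|v_\tau\|_{H^2(0,s;H_0^1(I)\cap H^2(I))}^2\,ds$.

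Next I would run, for $v_\tau$, the chain of multiplier estimates used in the proof of Theorem~\ref{thm-4.2}: test \eqref{eqn-4.18} successively with $v_{\tau,t}$, $v_{\tau,tt}$, $-v_{\tau,xxt}$, $-v_{\tau,xxtt}$, integrate over $I\times(0,t)$, and use \eqref{eqn-4.19}. The only difference from Theorem~\ref{thm-4.2} is that the data $u_\tau,u_{\tau\tau}$ are now nonzero, which merely adds $C_1$ to the right-hand side; the forcing terms $-k_\tau u_0''$, $-k_\tau\ast\hat v_{xx}$, $-\hat k\ast v_{\tau,xx}$, $-g$ and $z_\tau''\frac{x}{\ell}$ are estimated exactly as the corresponding ones there, Lemma~\ref{lem-2.1} turning the convolutions into $\int_0^t\|\hat k\|_{L^2(0,s)}^2\|v_{\tau,xx}(s)\|_{L^2(I)}^2\,ds$-type terms, $k_\tau u_0''$ producing $\int_0^t|k_\tau(s)|^2\,ds$, and $g$ producing $C_1$. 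Inserting the bounds of the previous paragraph (which, together with $|k_\tau(s)|\le|k(\tau)|+s^{1/2}\|k_\tau'\|_{L^2(0,s)}$ and Fubini, make every forcing contribution a running time integral of $\|v_\tau\|_{H^2(0,\cdot;H_0^1(I)\cap H^2(I))}^2$) leads to
\[
\|v_\tau\|_{H^2(0,t;H_0^1(I)\cap H^2(I))}^2\le C_1+C_2\int_0^t\|v_\tau\|_{H^2(0,s;H_0^1(I)\cap H^2(I))}^2\,ds ,\qquad t\in[0,\delta],
\]
so Gronwall's inequality gives $\|v_\tau\|_{H^2(0,\delta;H_0^1(I)\cap H^2(I))}^2\le C_1e^{C_2T}$; the first display of the previous paragraph then bounds $\|k_\tau'\|_{L^2(0,\delta)}$, and adding back the fixed contribution of $(\hat v,\hat k)$ yields the claim. (Alternatively, once $k_\tau'$ and $z_\tau''$ have been expressed through $v_\tau$, the multiplier estimates reduce to an inequality of the form \eqref{eqn-4.45} and Lemma~\ref{lem-4.3} applies directly.)

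The step I expect to be the main obstacle is precisely this last bookkeeping: the unknown kernel derivative $k_\tau'$, the coupled boundary quantity $y_\tau'''$ (hence $z_\tau''$), and the Volterra term $\hat k\ast v_{\tau,xx}$ acting on the \emph{unknown} $v_\tau$ all sit inside the forcing of the wave equation, and — because $\delta$ is not small — none of them can be removed by the smallness/contraction argument of Theorem~\ref{thm-4.1}; one must instead organise every single contribution so that it is either a running time integral of the full energy or absorbable into a top-order term with an arbitrarily small constant, which is exactly what allows one scalar Gronwall inequality to close the whole system uniformly in $\delta$.
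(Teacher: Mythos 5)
Your overall strategy coincides with the paper's: shift time by $\tau$, invoke Lemma \ref{lem-4.1} (and uniqueness from Theorem \ref{thm-4.2}) to reduce to the system \eqref{eqn-4.18}--\eqref{eqn-4.22} for $(v_\tau,k_\tau)$, bound $k_\tau'$ from \eqref{eqn-4.21} by lower-order norms of $v_\tau$ (your Volterra--Gronwall absorption of $k_\tau'\ast b$ is a legitimate way to get the analogue of \eqref{eqn-4.55}), run the multiplier estimates of Theorem \ref{thm-4.2}, and close with a Gronwall-type argument uniformly in $\delta$. However, there is a concrete flaw in your central displays. The quantity $G'[(v_\tau)_{xx}](t)=\frac{1}{\psi(\ell)}\bigl(f_\tau''(t)+\int_0^\ell\psi(x)(v_\tau)_{xxt}(x,t)\,dx\bigr)$ entering \eqref{eqn-4.22} is of top order: its $L^2(0,t)$-norm is comparable to $\|(v_\tau)_{xxt}\|_{L^2(0,t;L^2(I))}$, which is a piece of $\|v_\tau\|_{H^2(0,t;H_0^1(I)\cap H^2(I))}$ itself, \emph{not} of $\int_0^t\|v_\tau\|^2_{H^2(0,s;H_0^1(I)\cap H^2(I))}\,ds$ (concentrate the integrand near $s=t$ to see that the claimed domination fails). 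Hence your asserted bounds $\|y_\tau'''\|^2_{L^2(0,t)},\ \|z_\tau''\|^2_{L^2(0,t)}\le C_1+C_2\int_0^t\|v_\tau\|^2_{H^2(0,s;\cdot)}\,ds$, and therefore the final inequality $\|v_\tau\|^2_{H^2(0,t;\cdot)}\le C_1+C_2\int_0^t\|v_\tau\|^2_{H^2(0,s;\cdot)}\,ds$, do not follow as written; what one actually obtains is $N(t)\le C_1+C\,N(t)+C_2\int_0^t N(s)\,ds$ with an $O(1)$ constant $C$ that, precisely because $\delta$ may not be taken small, cannot be absorbed.

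The gap is repairable along lines you partly anticipate, and the repair is exactly where the paper's proof goes. Either run the Gronwall on the sup-in-time energies that the $v_{\tau,t}$ and $-v_{\tau,xxt}$ multipliers actually control (then the forcing contribution $\int_0^t|z_\tau''(s)|^2\,ds$ becomes a running integral of that energy, the inequality closes, and the $L^2$-in-time bounds on $v_{\tau,tt}$, $v_{\tau,xxtt}$ follow afterwards from the $v_{\tau,tt}$ and $-v_{\tau,xxtt}$ multipliers); or follow the paper: keep $\|k_\tau'\|^2_{L^2(0,t)}$ on the right of the energy bound as in \eqref{eqn-4.54}, estimate $k_\tau'$ by $\|f^{(iv)}\|_{L^2(0,t)}$ plus $\|(v_\tau)_t\|_{L^2(0,t;H^1_0(I))}$ as in \eqref{eqn-4.55}, split off an $\varepsilon$-fraction of the top-order norm by Gagliardo--Nirenberg, absorb it to reach \eqref{eqn-4.57}, and conclude with Lemma \ref{lem-4.3}, whose hypothesis \eqref{eqn-4.45} is tailored to this situation. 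Your parenthetical alternative is essentially this second route, but reaching the form \eqref{eqn-4.45} requires exactly the interpolation/absorption step for the top-order contributions of \eqref{eqn-4.22} (including the boundary term $\psi(\ell)(v_\tau)_{xt}(\ell,t)$ left over if one integrates $\int_0^\ell\psi(x)(v_\tau)_{xxt}\,dx$ by parts), and the proposal does not carry this out.
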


\begin{proof}
 Owing to Lemma \ref{lem-4.1} and Theorem \ref{thm-4.2}, if we set
\begin{align*}
    v_{\tau}(t) := v(\tau + t), \quad k_{\tau} (t) := k(\tau + t), \ t \in [0,\delta],
\end{align*}
we get $(v_{\tau},k_{\tau})$ solves the system \eqref{eqn-4.18}--\eqref{eqn-4.22}. Let $k = \hat k$. Applying the same arguments to estimate $v_{\tau}$ from the system \eqref{eqn-4.18}--\eqref{eqn-4.20}, \eqref{eqn-4.22} as in the proof of Theorem \ref{thm-4.2}, we obtain the following estimate:
\begin{align}\label{eqn-4.54}
    \|v_{\tau}\|^2_{H^2(0, t; H^1_0(I) \cap H^2(I))} \leq C(u_0,\hat k',\hat v, T)\left[\|u_{\tau}\|^2_{H^1_0(I) \cap H^2(I)}+\|u_{\tau\tau}\|^2_{H^1_0(I)\cap H^2(I)}+\|k'_{\tau}\|^2_{L^2(0,t)}\right],
\end{align}
for all $t\in [0,\delta].$ Similarly, we estimate $k'_{\tau}$ from the equation \eqref{eqn-4.21}
\begin{align}\label{eqn-4.55}
    \|k'_{\tau}\|^2_{L^2(0,t)} & \leq C(\hat v,\hat k',T)\left[\|f^{(iv)}\|^2_{L^2(0,t)}+\|(v_{\tau})_t\|^2_{L^2(0, t; H^1_0(I))}\right]
\nonumber \\ & 
\leq C(\hat v,\hat k',T)\left[\|f^{(iv)}\|^2_{L^2(0,t)}+\varepsilon \|(v_{\tau})_t\|^2_{L^2(0, t; H^1_0(I) \cap H^2(I))}+C(\varepsilon)\|(v_{\tau})_t\|^2_{L^2(0, t; H^1_0(I))}\right].
\end{align}
Substituting \eqref{eqn-4.55} in \eqref{eqn-4.54}, for all $t\in [0,\delta],$ we obtain
\begin{align}\label{eqn-4.56}
    \|v_{\tau}\|^2_{H^2(0, t; H^1_0(I) \cap H^2(I))} & \leq C(u_0,\hat k',\hat v, T)\Big[\|u_{\tau}\|^2_{H^1_0(I) \cap H^2(I)}+\|u_{\tau\tau}\|^2_{H^1_0(I)\cap H^2(I)} +\|f^{(iv)}\|^2_{L^2(0,t)}
\nonumber \\ & \quad
+\varepsilon \|(v_{\tau})_t\|^2_{L^2(0, t; H^1_0(I) \cap H^2(I))}+C(\varepsilon)\|(v_{\tau})_t\|^2_{L^2(0, t; H^1_0(I))}\Big].
\end{align}
Choosing $\varepsilon$ to be sufficiently small, so that $\varepsilon C(u_0,\hat k',\hat v, T)\leq \frac12,$ from \eqref{eqn-4.56}, we obtain 
\begin{align}\label{eqn-4.57}
&  \|v_{\tau}\|^2_{H^2(0, t; H^1_0(I) \cap H^2(I))} 
\nonumber\\ & \leq C(u_0,\hat k',\hat v, T)\Big[\|u_{\tau}\|^2_{H^1_0(I) \cap H^2(I)}+\|u_{\tau\tau}\|^2_{H^1_0(I)\cap H^2(I)} +\|f^{(iv)}\|^2_{L^2(0,t)}+\|(v_{\tau})_t\|^2_{L^2(0, t; H^1_0(I))}\Big].
\end{align}
Thus, we conclude from Lemma \ref{lem-4.3}, \eqref{eqn-4.57} and \eqref{eqn-4.55} that
\begin{align*}
    \|v\|_{H^2(0, \tau+\delta; H^1_0(I) \cap H^2(I))}+\|k'\|_{L^2(0,\tau+\delta)}\leq C,
\end{align*}
which completes the proof.
\end{proof}

Finally, we prove Theorem \ref{thm-4.3}.

\begin{proof}[{\bf Proof of Theorem \ref{thm-4.3}.}] 
Global-in-time uniqueness follows from Theorem \ref{thm-4.2}. It remains to prove the existence. For this, having Lemma \ref{lem-3.1} in hand, it suffices to show the existence of solution to the system \eqref{eqn-3.2}--\eqref{eqn-3.6}, say $(v,k) \in H^2(0, T; H^1_0(I) \cap H^2(I))\times H^1(0,T).$ To obtain the same, let us set
\begin{align*}
    { \bf T} & :=\Big\{\tau \in (0,T]:  (v,k)\in (v,k) \in H^2(0, T; H^1_0(I) \cap H^2(I))\times H^1(0,T)
\\
& \qquad  \qquad \mbox{ solves the system } \eqref{eqn-3.2}\text{--}\eqref{eqn-3.6}\Big\}.
\end{align*}
Obviously ${ \bf T}\neq \varnothing$ from Theorem \ref{thm-4.1}. Owing to Theorem \ref{thm-4.2}, for all $\tau \in { \bf T},$ the solution is unique and will be denoted by $(v_{\tau},k_{\tau}).$ Let $\tau_1$ and $\tau_2$ be any two elements from ${ \bf T}$ such that $\tau_1\leq \tau_2.$ Then, we have $v_{\tau_2}$ and $k_{\tau_2}$ are extensions of $v_{\tau_1}$ and $k_{\tau_1}$, respectively. Set $\widehat T:=\sup{ \bf T}$. Of course, $0< \widehat T\leq T.$ We need to prove $\widehat T=T$ for the global existence. Let us define
\begin{align*}
    v:[0,\widehat T)\rightarrow H^1_0(I) \cap H^2(I),\ v(t)=v_{\tau}(t),\ \mbox{ if } t\leq\tau,
\end{align*}
and
\begin{align*}
    k:[0,\widehat T)\rightarrow \mathbb{R},\ k(t)=k_{\tau}(t),\ \mbox{ if } t\leq\tau.
\end{align*}
Then $(v,k)$ is unique solution of the system of the system \eqref{eqn-3.2}--\eqref{eqn-3.6} in $[0,\tau),$ for all $\tau \in (0,\widehat T).$ By Lemma \ref{lem-4.4}, there exists $C>0,$ such that, for all $\tau \in [\widehat T/2,\widehat T),$
\begin{align}\label{eqn-4.58}
    \|v\|_{H^2(0, \tau; H^1_0(I) \cap H^2(I))}+\|k'\|_{L^2(0,\tau)}
 &
=\|v\|_{L^2(0, \tau; H^1_0(I) \cap H^2(I))}+\|v_t\|_{L^2(0, \tau; H^1_0(I) \cap H^2(I))}
\nonumber \\ & \quad +\|v_{tt}\|_{L^2(0, \tau; H^1_0(I) \cap H^2(I))}+\|k'\|_{L^2(0,\tau)}
\nonumber\\ & \leq C.
\end{align}
As $\tau\rightarrow\widehat T$ in \eqref{eqn-4.58}, we obtain
\begin{align*}
    \|v\|_{L^2(0, \widehat T; H^1_0(I) \cap H^2(I))}+\|v_t\|_{L^2(0,\widehat T; H^1_0(I) \cap H^2(I))}+\|v_{tt}\|_{L^2(0, \widehat T; H^1_0(I) \cap H^2(I))}+\|k'\|_{L^2(0,\widehat T)}\leq C.
\end{align*}
Therefore, we conclude that $(v,k)\in H^2(0, \widehat T; H^1_0(I) \cap H^2(I))\times H^1(0,\widehat T)$ and solves the system \eqref{eqn-3.2}--\eqref{eqn-3.6} in $[0,\widehat T].$ But this implies that $\widehat T=T.$ If not, by Lemma \ref{lem-4.2}, we could extend $(v,k)$ to a solution with domain $[0,\widehat T+\delta],$ but this is not compatible with the definition of $\widehat T,$ which completes the proof.
\end{proof}

	\begin{appendix}
	\renewcommand{\thesection}{\Alph{section}}
	\numberwithin{equation}{section}
    \section{Energy estimates for a linear problem}\label{linear-EE}

In this section, provide energy estimates for a linear system which play a key role in our analysis. Consider the following linear initial boundary value problem
\begin{equation}\label{eqn-A.1}
    \left\{
    \begin{aligned}
        v_{tt}-v_{xx}-\beta v_{xxtt} & = K, && (x,t)\in I\times (0,T),
\\
v\big|_{x=0}=v\big|_{x=\ell} & =0,  && 0<t<T,
\\
v\big|_{t=0}=v_0(x), \ v_t\big|_{t=0} & =v_1(x),   &&  x\in I.
    \end{aligned}
    \right.
\end{equation}

 Our aim is to show the following estimate:
\begin{align}\label{eqn-A.2}
    \|v\|_{H^2(0,T;H_0^1(I)\cap H^2(I))}\leq C(\|v_0\|_{H_0^1(I)\cap H^2(I)}+\|v_1\|_{H_0^1(I)\cap H^2(I))}+\|K\|_{L^2(0,T;L^2(I))}).
\end{align}
We will provide formal calculation here, one can obtain this by a standard Faedo–Galerkin approximation.

Taking the inner product with $v_t$ in $\eqref{eqn-A.1}_1$ and using integration by parts, we get
\begin{align*}
 & \frac{1}{2}\frac{d}{dt}\bigg[\|v_{t}(t)\|^2_{L^2(I)} +  \|v_{x}(t)\|^2_{L^2(I)} + \beta \|v_{xt}(t)\|^2_{L^2(I)}  \bigg]\nonumber\\ &  =  \int_0^\ell  K(x,t) v_{t}(x,t) dx 
   \leq  \|K(t)\|_{L^2(I)}\|v_{t}(t)\|_{L^2(I)}  \leq  \frac12\|K(t)\|_{L^2(I)}^2 + \frac12 \|v_{t}(t)\|^2_{L^2(I)},
\end{align*}
for a.e. $t\in[0,T]$, which gives
\begin{align}\label{eqn-A.3}
 & \sup_{t\in[0,T]}\|v_{t}(t)\|^2_{L^2(I)} +  \sup_{t\in[0,T]} \|v_{x}(t)\|^2_{L^2(I)} + \beta \sup_{t\in[0,T]} \|v_{xt}(t)\|^2_{L^2(I)}   
 \nonumber\\ & \leq  \left[\|v_{t}(0)\|^2_{L^2(I)} +    \|v_{x}(0)\|^2_{L^2(I)} + \beta  \|v_{xt}(0)\|^2_{L^2(I)} + \|K(t)\|_{L^2(0,T;L^2(I))}^2\right] e^{T}
  \nonumber\\ & = \left[ \|v_{1}\|^2_{L^2(I)} +    \|v_{0}'\|^2_{L^2(I)} + \beta  \|v_{1}'\|^2_{L^2(I)} +  \|K(t)\|_{L^2(0,T;L^2(I))}^2 \right] e^{T} =: V_1(T).
\end{align}
Taking the inner product with $v_{tt}$ in $\eqref{eqn-A.1}_1$ and using integration by parts, we get
\begin{align*}
 & \|v_{tt}(t)\|^2_{L^2(I)} + \beta \|v_{xtt}(t)\|^2_{L^2(I)}   \nonumber\\ &  = -\int_0^\ell  v_x(x,t) v_{xtt}(x,t) dx +  \int_0^\ell  K(x,t) v_{tt}(x,t) dx 
  \nonumber\\ &  \leq  \|v_{x}(t)\|_{L^2(I)}\|v_{xtt}(t)\|_{L^2(I)} +  \|K(t)\|_{L^2(I)}\|v_{tt}(t)\|_{L^2(I)}  
  \nonumber\\ & \leq \frac{1}{2\beta} \|v_{x}(t)\|^2_{L^2(I)} + \frac{\beta}{2} \|v_{xtt}(t)\|^2_{L^2(I)}+  \frac12 \|K(t)\|_{L^2(I)}^2 + \frac12 \|v_{tt}(t)\|^2_{L^2(I)},
\end{align*}
for a.e. $t\in[0,T]$, which gives
\begin{align}\label{eqn-A.4}
  \|v_{tt}(t)\|^2_{L^2(0,T;L^2(I))} + \beta \|v_{xtt}(t)\|^2_{L^2(0,T;L^2(I))}  
    & \leq \frac{1}{\beta} \|v_{x}(t)\|^2_{L^2(0,T;L^2(I))} +  \|K(t)\|_{L^2(0,T;L^2(I))}^2
  \nonumber\\ & \leq \frac{T  V_1(T)}{\beta} +  \|K(t)\|_{L^2(0,T;L^2(I))}^2< + \infty.
\end{align}
Taking the inner product with $-v_{xxt}$ in $\eqref{eqn-A.1}_1$ and using integration by parts, we get
\begin{align*}
 & \frac{1}{2}\frac{d}{dt} \bigg[ \|v_{xt}(t)\|^2_{L^2(I)} +  \|v_{xx}(t)\|^2_{L^2(I)} +  \beta \|v_{xxt}(t)\|^2_{L^2(I)} \bigg] \nonumber\\ &  = - \int_0^\ell  K(x,t) v_{xxt}(x,t) dx 
   \leq  \|K(t)\|_{L^2(I)}\|v_{xxt}(t)\|_{L^2(I)}  \leq  \frac{1}{2\beta}\|K(t)\|_{L^2(I)}^2 + \frac{\beta}{2} \|v_{xxt}(t)\|_{L^2(I)},
\end{align*}
for a.e. $t\in[0,T]$, which gives
\begin{align}\label{eqn-A.5}
 & \sup_{t\in[0,T]}\|v_{xt}(t)\|^2_{L^2(I)} +  \sup_{t\in[0,T]} \|v_{xx}(t)\|^2_{L^2(I)} + \beta \sup_{t\in[0,T]} \|v_{xxt}(t)\|^2_{L^2(I)}   
 \nonumber\\ & \leq  \|v_{xt}(0)\|^2_{L^2(I)} +    \|v_{xx}(0)\|^2_{L^2(I)} + \beta  \|v_{xxt}(0)\|^2_{L^2(I)} + \frac{1}{\beta} \|K(t)\|_{L^2(0,T;L^2(I))}^2
  \nonumber\\ & =  \|v_{1}'\|^2_{L^2(I)} +    \|v_{0}''\|^2_{L^2(I)} + \beta  \|v_{1}''\|^2_{L^2(I)} + \frac{1}{\beta} \|K(t)\|_{L^2(0,T;L^2(I))}^2 =: V_2(T).
\end{align}
Taking the inner product with $-v_{xxtt}$ and using integration by parts, we get
\begin{align*}
  \|v_{xtt}(t)\|^2_{L^2(I)} + \beta \|v_{xxtt}(t)\|^2_{L^2(I)}     &  = -\int_0^\ell  v_{xx}(x,t) v_{xxtt}(x,t) dx -  \int_0^\ell  K(x,t) v_{xxtt}(x,t) dx 
  \nonumber\\ &  \leq  \bigg[\|v_{xx}(t)\|_{L^2(I)} +  \|K(t)\|_{L^2(I)}\bigg]\|v_{xxtt}(t)\|_{L^2(I)}  
  \nonumber\\ & \leq \frac{1}{\beta} \|v_{xx}(t)\|^2_{L^2(I)} + \frac{\beta}{2} \|v_{xxtt}(t)\|^2_{L^2(I)} +  \frac{1}{\beta} \|K(t)\|_{L^2(I)}^2 ,
\end{align*}
for a.e. $t\in[0,T]$, which gives
\begin{align}\label{eqn-A.6}
 2 \|v_{xtt}(t)\|^2_{L^2(0,T;L^2(I))} + \beta \|v_{xxtt}(t)\|^2_{L^2(0,T;L^2(I))}  
    & \leq \frac{2}{\beta} \bigg[\|v_{xx}(t)\|^2_{L^2(0,T;L^2(I))} +  \|K(t)\|_{L^2(0,T;L^2(I))}^2\bigg]
  \nonumber\\ & \leq \frac{2}{\beta} \bigg[T  V_2(T) +  \|K(t)\|_{L^2(0,T;L^2(I))}^2\bigg] < +\infty.
\end{align}
Hence combining \eqref{eqn-A.3}-\eqref{eqn-A.6}, we obtain \eqref{eqn-A.2}. This completes the purpose of this section.

    \end{appendix}

\medskip\noindent
{\bf Acknowledgments:} Z.D. Totieva is funded by Ministry of Science and Higher Education of the Russian Federation, agreement № 075-02-2025-1633. K. Kinra is funded by national funds through the FCT - Funda\c c\~ao para a Ci\^encia e a Tecnologia, I.P., under the scope of the project  UIDB/00297/2020 and UIDP/00297/2020 (Center for Mathematics and Applications). Support for M. T. Mohan's research received from the National Board of Higher Mathematics (NBHM), Department of Atomic Energy, Government of India (Project No. 02011/13/2025/NBHM(R.P)/R\&D II/1137).

\end{document}